\newtheorem{theorem}{Theorem}
\newtheorem{corollary}[theorem]{Corollary}
\newtheorem{lemma}[theorem]{Lemma}
\newtheorem{proposition}[theorem]{Proposition}
\newtheorem{remark}[theorem]{Remark}
\newenvironment{proof}[1][Proof]{\textbf{#1.} }{\ \rule{0.5em}{0.5em}}
\begin{document}

\author{George Avalos \\
%EndAName
Department of Mathematics, University of Nebraska-Lincoln, USA \and Pelin G. Geredeli \\
%EndAName
Department of Mathematics, Iowa State University, USA \and Boris Muha \\
%EndAName
Department of Mathematics, Faculty of Science, University of Zagreb, Croatia}
\title{ Wellposedness, Spectral Analysis and Asymptotic Stability of a Multilayered Heat-Wave-Wave System%PDE Model 
}
\maketitle

\begin{abstract}
In this work we consider a multilayered heat-wave system where a 3-D heat equation is coupled with a 3-D wave equation via a 2-D interface whose dynamics is described by a 2-D wave equation. This system can be viewed as a simplification of a certain fluid-structure interaction (FSI) PDE model where the structure is of composite-type; namely it consists of a \textquotedblleft thin\textquotedblright\ layer and a \textquotedblleft
thick\textquotedblright\ layer. We associate the wellposedness of the system with a strongly
continuous semigroup and establish its asymptotic decay.

%In this work, a certain fluid-structure-interaction (FSI) PDE model will be
%considered, with a view of associating its wellposedness with a strongly
%continuous semigroup and establishing its asymptotic decay. In the FSI PDE
%model under consideration, the structure component is composed of a
%\textquotedblleft thin\textquotedblright\ layer and a \textquotedblleft
%thick\textquotedblright\ layer. The thin layer serves as an interface
%between the respective fluid and thick layer structure dynamics. 
Our first result is semigroup well-posedness for the (FSI) PDE dynamics. Utilizing here a Lumer-Phillips approach, we show that the fluid-structure system generates a $C_0$-semigroup on a chosen finite energy space of data. As our second result, we prove that the solution to the (FSI) dynamics generated by the $C_0$-semigroup tends asymptotically to the zero state for all initial data. That is, the semigroup of the (FSI) system is strongly stable. For this stability work, we analyze the spectrum of the generator $\mathbf A$ and show that the spectrum of $\mathbf A$  does not intersect the imaginary axis.  \vskip.3cm
\noindent \textbf{Key terms:} Fluid-structure interaction, heat-wave system, well-posedness, semigroup, strong stability
\end{abstract}

\bigskip

\section{Introduction}
\subsection{Motivation and Literature}
This work is motivated by a longstanding interest in the analysis of fluid-structure interaction (FSI) partial differential equation (PDE) dynamics. Such FSI problems deal with multi-physics systems consisting of fluid and structure PDE components. These systems are ubiquitous in nature and have many applications, e.g., in biomedicine \cite{FSIforBIO} and aeroelasticity \cite{Dowell15}. However, the resulting PDE systems are very complicated (due to nonlinearities, moving boundary phenomena and hyperbolic-parabolic coupling) and despite extensive research activity in last 20 years, the comprehensive analytic theory for such systems is still not available. Accordingly, by way of obtaining a better understanding of FSI dynamics, it would seem natural to consider those FSI PDE models, which although constitute a simplification of sorts, yet retain their crucial novelties and intrinsic difficulties. For example, in the past, coupled heat-wave PDE systems (and variations thereof) have been considered for study: the heat equation component is regarded as a simplification of the fluid flow component of the FSI dynamics; the wave equation component is regarded as a simplification of the structural (elastic) component; see e.g., [\cite{lions1969quelques}, Section 9] and \cite{RauchZhangZuazua}. See also the works \cite{Du, A-T, Barbu, Chambolle, Courtand}, in which the fluid PDE component of fluid-structure interactions is governed by Stokes or Navier-Stokes flow.

Here we consider a multilayered version of such heat-wave system; where the coupling of the 3-D heat and the 3-D wave equations is realized via an additional 2-D wave equation on the boundary interface. This is a simplified (yet physically relevant) version of a benchmark fluid-component structure PDE model which was introduced in \cite{SunBorMulti}. This particular FSI problem was principally motivated by the mathematical modeling of vascular blood flow: such modeling PDE dynamics will account for the fact that the blood-transporting vessels are generally composed of several layers, each with different mechanical properties and are moreover separated by the thin elastic laminae (see \cite{multi-layered} for more details). In order to mathematically model these biological features, the multilayered structural component of such FSI dynamics is governed by a 3-D wave-2-D wave PDE system. For the physical interpretation and derivation of such coupled "thick-thin" structure models we refer reader to \cite[Chapter 2]{CiarletBook2} and references within.

As we said, although the present multilayered heat- wave- wave system constitutes a simplification somewhat of the FSI model in \cite{SunBorMulti} -- in particular, the 2-D wave equation takes the place of a fourth order plate or shell PDE -- our results remain valid if we replace the 2-D wave equation with the corresponding linear fourth order equation. Within the context of the present multilayered heat-wave-wave coupled system, we are interested in asymptotic behavior of the solutions, and regularization effects of the fluid dissipation and coupling via the elastic interface, inasmuch as there is a dissipation of the natural energy of the heat-wave-wave PDE system – with this dissipation coming strictly from the heat component of the FSI dynamics – it is a reasonable objective to determine if this thermal dissipation actually gives rise to asymptotic decay (at least) to all three PDE solution components: That is, we seek to ascertain longtime decay of both 3-D and 2-D wave solution components, as well as the heat solution component. Such a strong stability can be seen as a measure of the "strength" of the coupling condition. For the classical heat-wave system (without the 2-D wave equation on the interface) this question is by now rather well understood and precise decay rates are well known (see \cite{AvalosLasieckaTriggiani16,Batty19} and references within.) (We should emphasize that the high-frequency oscillations in the structure are not efficiently dissipated and therefore there is no exponential decay of the energy.)

Our present investigation into the multilayered wave-heat systems is motivated in part by \cite{SunBorMulti} which considered a nonlinear FSI comprised by 2-D (thick layer) wave equation and 1-D wave equation (thin layer) coupled to a 2-D fluid PDE across a boundary interface. For these dynamics, wellposedness was established in \cite{SunBorMulti}, in part by exploiting an underlying regularity which was available by the presence of said wave equation. (Such regularizing effects were observed numerically in \cite{multi-layered} and precisely quantified in the sense of Sobolev for a 1-D FSI system in \cite{BorisSimplifiedFSI}. For similar regularizing effects in the context of hyperbolic-hyperbolic PDE couplings, we refer to \cite{HansenZuazua, KochZauzua, LescarretZuazua15}.) By way of gaining a better qualitative understanding of FSI systems, such as those in \cite{SunBorMulti}, we here embark upon an investigation of the aforesaid 3-D heat-2-D wave-3-D wave coupled PDE system; in particular, we will establish the semigroup wellposedness and asymptotic decay to zero of the underlying energy of this FSI. These objectives of wellposedness and decay will entail a precise understanding of the role played by the coupling mechanisms on the elastic interface and by the fluid dissipation. In future work, we will investigate possible regularizing effects, at least for certain polygonal configurations of the boundary interface.

We finish this section by giving a brief literature review, in addition to the ones mentioned above. FSI models have been very active and broad area of research in the last two decades and therefore here we avoid presenting a full literature review: we merely mention here a few recent monographs and review works \cite{FSIforBIO,bodnar2017particles,SunnyStentsSIAM,GazzolaReview,MTEFSI,RichterBook}, where interested reader can find further references. The study of various simplified FSI models which manifest parabolic-hyperbolic coupling has a long history going back at least to [\cite{lions1969quelques}, Section 9], where the Navier-Stokes equations are coupled with the wave equation along a fixed interface. However, even in the linear case the presence of the pressure term gives rise to significant mathematical challenges in developing the semigroup wellposednes theory \cite{AvalosTriggiani09}. Thus, the heat-wave system has been extensively studied in last decade as a suitable simplified model for stability analysis of parabolic-hyperbolic coupling occurring in FSI systems, see e.g. \cite{A-B, AvalosTriggiani2,Duyckaerts,Fathallah,ZhangZuazuaARMA07} and references within. To the best of our knowledge there are still no results about strong stability of FSI systems with an elastic interface.

\subsection{PDE Model}

Let the fluid geometry $\Omega _{f}$ $\subseteq \mathbb{R}^{3}$
be a Lipschitz, bounded domain. The structure domain $\Omega _{s}$ $%
\subseteq \mathbb{R}^{3}$ will be \textquotedblleft completely
immersed\textquotedblright\ in $\Omega _{f}$; with $\Omega _{s}$ being a
convex polyhedral domain.
\begin{center}
 \includegraphics[scale=0.4]{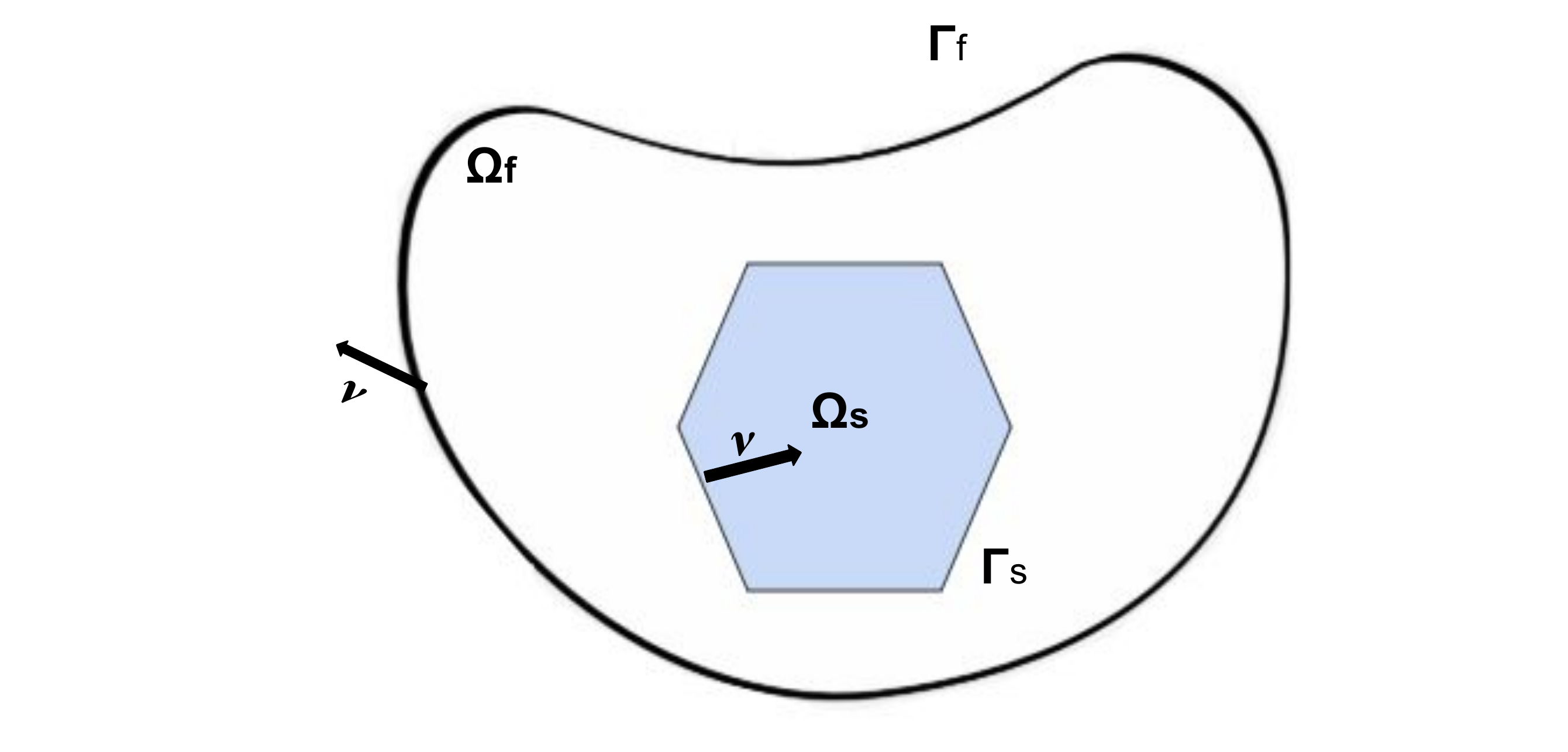}
 \end{center} 
 \begin{center}
 \textbf{Figure: Geometry of the FSI Domain}
 \end{center}
 In the figure, $\Gamma _{f}$ is the part of boundary of $\partial \Omega
_{f} $ which does not come into contact with $\Omega _{s}$; $\Gamma
_{s}=\partial \Omega _{s}$ is the boundary interface between $\Omega _{f}$
and $\Omega _{s} $ wherein the coupling between the two distinct fluid and
elastic dynamics occurs. (And so, $\partial \Omega _{f}=\Gamma _{s}\cup
\Gamma _{f}$.) We have that 
\begin{equation}
\Gamma _{s}=\cup _{j=1}^{K}\overline{\Gamma }_{j},  \label{1}
\end{equation}%
where $\Gamma _{i}\cap \Gamma _{j}=\emptyset $, for $i\neq j$. It is further
assumed that each $\Gamma _{j}$ is an open polygonal domain.

\medskip

Moreover, $n_{j}$ will denote the unit normal vector which is exterior to $%
\partial \Gamma _{j}$, $1\leq j\leq K$. With respect to this geometry, the $%
\mathbb{R}^{3}$ wave--$\mathbb{R}^{2}$ wave--$\mathbb{R}^{3}$ heat
interaction PDE model is given as follows:\\%
For $i\leq j\leq K,$
\begin{equation}
\left\{ 
\begin{array}{l}
u_{t}-\Delta u=0\text{ \ \ \ in \ }(0,T)\times \Omega _{f} \\ 
u|_{\Gamma _{f}}=0\text{ \ \ \ on \ }(0,T)\times \Gamma _{f};%
\end{array}%
\right.  \label{2a}
\end{equation}

\begin{equation}
\left\{ 
\begin{array}{l}
\frac{\partial ^{2}}{\partial t^{2}}h_{j}-\Delta h_{j}+h_{j}=\frac{\partial w%
}{\partial \nu }|_{\Gamma _{j}}-\frac{\partial u}{\partial \nu }|_{\Gamma
_{j}}\text{ \ \ \ on \ }(0,T)\times \Gamma _{j} \\ 
h_{j}|_{\partial \Gamma _{j}\cap \partial \Gamma _{l}}=h_{l}|_{\partial
\Gamma _{j}\cap \partial \Gamma _{l}}\text{ on \ }(0,T)\times (\partial
\Gamma _{j}\cap \partial \Gamma _{l})\text{, for all }1\leq l\leq K\text{
such that }\partial \Gamma _{j}\cap \partial \Gamma _{l}\neq \emptyset \\ 
\left. \dfrac{\partial h_{j}}{\partial n_{j}}\right\vert _{\partial \Gamma
_{j}\cap \partial \Gamma _{l}}=-\left. \dfrac{\partial h_{_{l}}}{\partial
n_{l}}\right\vert _{\partial \Gamma _{j}\cap \partial \Gamma _{l}}\text{on \ 
}(0,T)\times (\partial \Gamma _{j}\cap \partial \Gamma _{l})\text{, for all }%
1\leq l\leq K\text{ such that }\partial \Gamma _{j}\cap \partial \Gamma
_{l}\neq \emptyset .\text{\ }%
\end{array}%
\right.  \label{2.5b}
\end{equation}%
\begin{equation}
\left\{ 
\begin{array}{l}
w_{tt}-\Delta w=0\text{ \ \ \ on \ }(0,T)\times \Omega _{s} \\ 
w_{t}|_{\Gamma _{j}}=\frac{\partial }{\partial t}h_{j}=u|_{\Gamma _{j}}\text{
\ \ \ on \ }(0,T)\times \Gamma _{j}\text{, \ for }j=1,...,K%
\end{array}%
\right.  \label{2d}
\end{equation}%
\begin{equation}
\lbrack u(0),h_{1}(0),\frac{\partial }{\partial t}h_{1}(0),...,h_{K}(0),%
\frac{\partial }{\partial t}%
h_{K}(0),w(0),w_{t}(0)]=[u_{0},h_{01},h_{11},...,h_{0K},h_{1K},w_{0},w_{1}].
\label{IC}
\end{equation}%
Equation \eqref{2.5b}$_1$ is the dynamic coupling condition and represents a balance of forces on $\Gamma_j$. The left-hand side comes from the inertia and elastic energy of the thin structure, while the right-hand side accounts for the contact forces coming from the 3-D structure and the fluid, respectively. The last term of the left-hand side is added to ensure the uniqueness of the solution and physically means that the structure is anchored and therefore the displacement does not have a translational component. The coupling conditions \eqref{2.5b}$_2$ and \eqref{2.5b}$_3$ represent continuity of the displacement and contact force along the interface between sides $\Gamma_i$ and $\Gamma_l$, respectively. Equation \eqref{2d}$_2$ is a kinematic coupling condition and accounts for continuity of the velocity across the interface $\Gamma_j$. It corresponds to the no-slip boundary condition in fluid mechanics.
Note that the boundary condition in (\ref{2d}) implies that for $t>0$,%
\begin{equation*}
w(t)|_{\Gamma _{j}}-h_{j}(t)=w(0)|_{\Gamma _{j}}-h_{j}(0),\text{ \ \ for }%
j=1,...,K.
\end{equation*}%
Accordingly, the associated space of initial data $\mathbf{H}$ incorporates
a compatibility condition. Namely, 
\begin{equation}
\begin{array}{l}
\mathbf{H}=\{[u_{0},h_{01},h_{11},...,h_{0k},h_{1k},w_{0},w_{1}]\in
L^{2}(\Omega _{f})\times H^{1}(\Gamma _{1})\times L^{2}(\Gamma _{1})\times
... \\ 
\text{ \ \ \ \ \ \ \ \ \ \ \ }\times H^{1}(\Gamma _{K})\times L^{2}(\Gamma
_{K})\times H^{1}(\Omega _{s})\times L^{2}(\Omega _{s})\text{, \ such that
for each }1\leq j\leq K\text{:\ (i) }w_{0}|_{\Gamma _{j}}=h_{0j}\text{; } \\ 
\text{ \ \ \ \ \ \ \ \ }\left. \text{(ii) }h_{0j}|_{\partial \Gamma _{j}\cap
\partial \Gamma _{l}}=h_{0l}|_{\partial \Gamma _{j}\cap \partial \Gamma _{l}}%
\text{ on \ }\partial \Gamma _{j}\cap \partial \Gamma _{l}\text{, for all }%
1\leq l\leq K\text{ such that }\partial \Gamma _{j}\cap \partial \Gamma
_{l}\neq \emptyset \right\} .%
\end{array}
\label{H}
\end{equation}%
Because of the given boundary interface compatibility condition, $\mathbf{H}$
is a Hilbert space with the inner product%
\begin{eqnarray}
(\Phi _{0},\widetilde{\Phi }_{0})_{\mathbf{H}} &=&(u_{0},\widetilde{u}%
_{0})_{\Omega _{f}}+\sum\limits_{j=1}^{K}(\nabla h_{0j},\nabla \widetilde{h}%
_{0j})_{\Gamma _{j}}+\sum\limits_{j=1}^{K}(h_{0j},\widetilde{h}%
_{0j})_{\Gamma _{j}}  \notag \\
&&+\sum\limits_{j=1}^{K}(h_{1j},\widetilde{h}_{1j})_{\Gamma _{j}}+(\nabla
w_{0},\nabla \widetilde{w}_{0})_{\Omega _{s}}+(w_{1},\widetilde{w}%
_{1})_{_{\Omega _{s}}},  \label{Hilbert}
\end{eqnarray}%
where%
\begin{equation}
\Phi _{0}=\left[ u_{0},h_{01},h_{11},...,h_{0K},h_{1K},w_{0},w_{1}\right]
\in \mathbf{H}\text{; \ }\widetilde{\Phi }_{0}=\left[ \widetilde{u}_{0},%
\widetilde{h}_{01},\widetilde{h}_{11},...,\widetilde{h}_{0K},\widetilde{h}%
_{1K},\widetilde{w}_{0},\widetilde{w}_{1}\right] \in \mathbf{H}.
\label{stat}
\end{equation}

\subsection{Novelty and Challenges}
The novelty of this work is that we consider an FSI model in which the interface is elastic and has mass. This is the simplest model 3-D of the interaction of the fluid with the composite structure which retains basic mathematical properties of the physical model. To the best of our knowledge this is the first result about asymptotic behavior of solution to such problems. We work in setting were the structure domain is polyhedron and dynamics of each polygon side of the boundary is governed by the 2-D linear wave equation. The wave equations are coupled via dynamic and kinematic coupling conditions over the common boundaries. We choose this setting because it will directly translate to numerical analysis of the problem. This work is an important first step to a finer analysis of the asymptotic decay (e.g. decay rates) and regularity properties of the solutions, and to better understanding of the influence of the elastic interface with mass to the qualitative properties of the solutions. 

By way of establishing the semigroup wellposedness of the multilayered FSI
model \eqref{2a}-\eqref{IC} -- i.e., Theorem 1 below -- we will show that the associated
generator $\mathbf{A}$, defined by \eqref{4a} and (A.i)-(A.iv) below, is maximal
dissipative, and so generates a $C_{0}$-semigroup of contractions on the
natural Hilbert space of finite energy \eqref{6}. The presence of the
\textquotedblleft thin layer\textquotedblright\ wave equation on $\Gamma _{j}
$, $1\leq j\leq K$, complicates this wellposedness work, vis-\`{a}-vis the
situation which prevails for the previous 3-D heat-3-D wave models in
\cite{AvalosLasieckaTriggiani16, AvalosTriggiani2, Duyckaerts, RauchZhangZuazua, ZhangZuazuaARMA07} for which a relatively straight invocation of
the Lax-Milgram Theorem suffices to establish the maximality of the
associated FSI generator. In the present work, we will likewise apply
Lax-Milgram in order to ultimately show the condition $Range(\lambda I-%
\mathbf{A})=\mathbf{H}$ -- where $\lambda >0$ positive; in particular,
Lax-Milgram will be applied for the solvability of a certain variational
equation, relative to elements in a certain subspace of $H^{1}(\Omega
_{f})\times H^{1}(\Gamma _{1})\times ...\times H^{1}(\Gamma _{K})\times
H^{1}(\Omega _{s})$. (See \eqref{8} below). This variational equation of course
reflects the presence of the thin wave components $h_{j}$ in \eqref{2a}-\eqref{IC}. The
complications arise in the subsequent justification that the solutions of
said variational equation give rise to solutions of the resolvent equation
(in \eqref{a1} below) which are indeed in $D(\mathbf{A})$. In particular, we must
proceed delicately to show that the obtained thin layer solution components
of resolvent relation \eqref{a1}  satisfy the continuity conditions \eqref{2.5b}$_{2}$ and
\eqref{2.5b}$_{3}$.

Having established the existence of a $C_{0}$-semigroup of contractions $%
\left\{ e^{\mathbf{A}t}\right\} _{t\geq 0}\subset \mathcal{L}(\mathbf{H})$
which models the multilayer FSI PDE dynamics \eqref{2a}-\eqref{IC} , we will subsequently
show the strong decay of this semigroup; this is Theorem \ref{SS} below. Inasmuch
as our analysis of the regularizing effects of the resolvent operator $%
\mathcal{R}(\lambda ;\mathbf{A})$ is to be undertaken in future work --
assuming there be such underlying smoothness, at least for some geometrical
configurations of the polygonal boundary segments; see Remark \ref{remark_3}
below -- the compactness of $D(\mathbf{A})$ is generally questionable.
Accordingly, in order to establish asymptotic decay of solutions to the FSI
PDE dynamics \eqref{2a}-\eqref{IC} , we will work to satisfy the conditions of the
wellknown  \cite{A-B}; see also \cite{L-P}. In particular, we will show below that 
$\sigma (\mathbf{A})\cap i\mathbb{R}=\emptyset $. (In our future work on
discerning uniform decay properties of solutions to the multilayered FSI
system \eqref{2a}-\eqref{IC}, the spectral information in Theorem \ref{SS} is also requisite; see
e.g., the resolvent criteria in \cite{huang} and \cite{tomilov}.) In showing
the nonpresence of $\sigma (\mathbf{A})$ on the imaginary axis --in
particular, to handle the continuous spectrum of $\mathbf{A}$-- we will
proceed in a manner somewhat analogous to what was undertaken in \cite{A-P2}
(in which another coupled PDE system, with the coupling accomplished across
a boundary interface, is analyzed with a view towards stability). However,
the thin layer wave equation in \eqref{2.5b} again gives rise to complications: In
the course of eliminating the possibility of approximate spectrum of $%
\mathbf{A}$ on $i\mathbb{R}$, we find it necessary to invoke the wave
multipliers which are used in PDE control theory for \emph{uniform}
stabilization of boundary controlled waves: namely, inasmuch as each $h_{j}$%
-wave equation in \eqref{2.5b} carries the difference of the 3-D wave and heat fluxes
as a forcing term, we cannot immediately control the thick wave trace $%
\left. \frac{\partial w}{\partial \nu }\right\vert _{\Gamma _{s}}$ in $H^{-%
\frac{1}{2}}(\Gamma _{s})$-norm, this control being needed for strong decay.
(This issue absolutely does not appear for the previously considered 3-D heat-3-D wave FSI models of  \cite{Duyckaerts} and the other mentioned works, since therein we
have only the difference of heat and wave fluxes as a coupling boundary
condition, which immediately leads to a decent $H^{-\frac{1}{2}}(\Gamma _{s})
$ estimate of the wave normal derivative, owing to the thermal dissipation.)
Consequently, we must invoke static versions of the wave identities in [14],
 \cite{trigg} and \cite{AG1}, by way of estimating the normal derivative of (a component of)
the 3-D wave solution variable $w$ in \eqref{2d}; see relation (74) below.

\subsection{Notation}

For the remainder of the text norms $||\cdot||$ are taken to be $L^2(D)$ for the domain $D$.  Inner products in $L^2(D)$ is written $(\cdot,\cdot)$, while inner products  $L_2(\partial D)$ are written $\langle\cdot,\cdot\rangle$. The space $ H^s(D)$ will denote the Sobolev space of order $s$, defined on a domain $D$, and $H^s_0(D)$ denotes the closure of $C_0^{\infty}(D)$ in the $H^s(D)$ norm
which we denote by $\|\cdot\|_{H^s(D)}$ or $\|\cdot\|_{s,D}$. We make use of the standard notation for the trace of functions defined on a Lipschitz domain $D$, i.e. for a scalar function $\phi \in H^1(D)$,  we denote $\gamma(w)$ to be the trace mapping from $H^1(D)$ to $H^{1/2}(\partial D)$. We will also denote pertinent duality pairings as $(\cdot, \cdot)_{X \times X'}$. 

\section{Main Results}

\subsection{The thick wave-thin wave-heat Generator}

With respect to the above setting, the PDE system given in (\ref{2a})-(\ref{IC}) can be recast as an ODE in Hilbert space $\mathbf{H}$. That
is, if $\Phi (t)=\left[ u,h_{1},\frac{\partial }{\partial t}h_{1},...,h_{K},\frac{%
\partial }{\partial t}h_{K},w,w_{t}\right] \in C([0,T];\mathbf{H})$ solves (%
\ref{2a})-(\ref{IC}) for $\Phi _{0}\in \mathbf{H}$, then there is a modeling operator $\mathbf{A}:D(%
\mathbf{A})\subset \mathbf{H}\rightarrow \mathbf{H}$ such that $\Phi(\cdot) $
satisfies,%
\begin{equation}
\frac{d}{dt}\Phi (t)=\mathbf{A}\Phi (t)\text{; \ }\Phi (0)=\Phi _{0}.
\label{ODE}
\end{equation}%
In fact, this operator $\mathbf{A}:D(\mathbf{A})\subset \mathbf{H}%
\rightarrow \mathbf{H}$ is defined as follows:%
\begin{equation}
\mathbf{A}=\left[ 
\begin{array}{cccccccc}
\Delta & 0 & 0 & 0 & 0 & 0 & 0 & 0 \\ 
0 & 0 & I & \cdots & 0 & 0 & 0 & 0 \\ 
-\frac{\partial }{\partial \nu }|_{\Gamma _{1}} & (\Delta -I) & 0 & \cdots & 
0 & 0 & \frac{\partial }{\partial \nu }|_{\Gamma _{1}} & 0 \\ 
\vdots & \vdots & \vdots & \cdots & \vdots & \vdots & \vdots & \vdots \\ 
0 & 0 & 0 & \cdots & 0 & I & 0 & 0 \\ 
-\frac{\partial }{\partial \nu }|_{\Gamma _{K}} & 0 & 0 & \cdots & (\Delta
-I) & 0 & \frac{\partial }{\partial \nu }|_{\Gamma _{K}} & 0 \\ 
0 & 0 & 0 & \cdots & 0 & 0 & 0 & I \\ 
0 & 0 & 0 & \cdots & 0 & 0 & \Delta & 0%
\end{array}%
\right] ;  \label{4a}
\end{equation}%
\begin{equation}
\begin{array}{l}
D(\mathbf{A})=\left\{ \left[ u_{0},h_{01},h_{11},\ldots
,h_{0K},h_{1K},w_{0},w_{1}\right] \in \mathbf{H}:\right. \\ 
\text{ \ \ \textbf{(A.i)} }u_{0}\in H^{1}(\Omega _{f})\text{, }h_{1j}\in H^{1}(\Gamma
_{j})\text{ for }1\leq j\leq K\text{, }w_{1}\in H^{1}(\Omega _{s})\text{;}
\\ 
\text{ \ }\left. \text{\textbf{(A.ii)} (a) }\Delta u_{0}\in L^{2}(\Omega _{f})\text{, 
}\Delta w_{0}\in L^{2}(\Omega _{s})\text{, (b) }\Delta h_{0j}-\frac{\partial
u_{0}}{\partial \nu }|_{\Gamma _{j}}+\frac{\partial w_{0}}{\partial \nu }%
|_{\Gamma _{j}}\in L^{2}(\Gamma _{j})\text{ \ for \ } 1\leq j\leq K\text{;}%
\right. \\ 
\text{ \ \ \ \ \ \ (c) }\left. \dfrac{\partial h_{0j}}{\partial n_{j}}%
\right\vert _{\partial \Gamma _{j}}\in H^{-\frac{1}{2}}(\partial \Gamma _{j})%
\text{, \ for \ } 1\leq j\leq K\text{;} \\ 
\text{ \ }\left. \text{\textbf{(A.iii)} }u_{0}|_{\Gamma _{f}}=0,\ \ u_{0}|_{\Gamma
_{j}}=h_{1j}=w_{1}|_{\Gamma _{j}},\ \text{for }1\leq j\leq K\text{;}\right.
\\ 
\text{ \ }\left. \text{\textbf{(A.iv)} For }1\leq j\leq K\text{: }\right. \\ 
\text{ \ \ \ \ \ \ (a) }h_{1j}|_{\partial \Gamma _{j}\cap \partial \Gamma
_{l}}=h_{1l}|_{\partial \Gamma _{j}\cap \partial \Gamma _{l}}\text{ on \ }%
\partial \Gamma _{j}\cap \partial \Gamma _{l}\text{, for all }1\leq l\leq K%
\text{ such that }\partial \Gamma _{j}\cap \partial \Gamma _{l}\neq
\emptyset ; \\ 
\text{ \ \ \ \ \ \ \ }\left. \text{(b) }\left. \dfrac{\partial h_{0j}}{%
\partial n_{j}}\right\vert _{\partial \Gamma _{j}\cap \partial \Gamma
_{l}}=-\left. \dfrac{\partial h_{0_{l}}}{\partial n_{l}}\right\vert
_{\partial \Gamma _{j}\cap \partial \Gamma _{l}}\text{ on \ }\partial \Gamma
_{j}\cap \partial \Gamma _{l}\text{, for all }1\leq l\leq K\text{ such that }%
\partial \Gamma _{j}\cap \partial \Gamma _{l}\neq \emptyset \right\} .%
\end{array}
\label{dom}
\end{equation}%

Now, in our first result, we provide a semigroup wellposedness for $\mathbf{A}:D(\mathbf{A})\subset \mathbf{H}%
\rightarrow \mathbf{H}$. This is given in the following theorem:

\begin{theorem}
\label{well}The operator $\mathbf{A}:D(\mathbf{A})\subset \mathbf{H}%
\rightarrow \mathbf{H}$, defined in (\ref{4a})-(\ref{dom}), generates a $%
C_{0}$-semigroup of contractions. Consequently, the solution $\Phi (t)=\left[
u,h_{1},\frac{\partial }{\partial t}h_{1},...,h_{K},\frac{\partial }{%
\partial t}h_{K},w,w_{t}\right] $ of (\ref{2a})-(\ref{IC}), or equivalently (%
\ref{ODE}), is given by 
\begin{equation*}
\Phi (t)=e^{\mathbf{A}t}\Phi _{0}\in C([0,T];\mathbf{H})\text{,}
\end{equation*}%
where $\Phi _{0}=\left[ u_{0},h_{01},h_{11},...,h_{0K},h_{1K},w_{0},w_{1}%
\right] \in \mathbf{H}$.
\end{theorem}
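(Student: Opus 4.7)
The plan is to invoke Lumer--Phillips by establishing (i) dissipativity of $\mathbf{A}$ on $D(\mathbf{A})$, and (ii) the range condition $\text{Range}(\lambda I - \mathbf{A}) = \mathbf{H}$ for some $\lambda > 0$. For dissipativity, I take $\Phi_0 = [u_0, h_{01}, h_{11}, \ldots, h_{0K}, h_{1K}, w_0, w_1] \in D(\mathbf{A})$ and expand $(\mathbf{A}\Phi_0, \Phi_0)_{\mathbf{H}}$ using \eqref{Hilbert} and Green's identities separately on $\Omega_f$, each face $\Gamma_j$, and $\Omega_s$. The cross-terms like $(\nabla h_{1j}, \nabla h_{0j})_{\Gamma_j} + (\Delta h_{0j}, h_{1j})_{\Gamma_j}$ (and the analogous $w$-block) are pure imaginary after integration by parts, so their real parts cancel. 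The surface fluxes on $\Gamma_s$ produced from $(\Delta u_0, u_0)_{\Omega_f}$ and $(\Delta w_0, w_1)_{\Omega_s}$ align with the thin-layer forcing traces $\mp\langle \partial u_0/\partial\nu, h_{1j}\rangle_{\Gamma_j}$ and $\pm\langle \partial w_0/\partial\nu, h_{1j}\rangle_{\Gamma_j}$, thanks to the kinematic coupling $u_0|_{\Gamma_j} = h_{1j} = w_1|_{\Gamma_j}$ of (A.iii), and cancel. The remaining edge integrals $\sum_j \langle \partial h_{0j}/\partial n_j, h_{1j}\rangle_{\partial\Gamma_j}$ telescope to zero pairwise on each shared edge by (A.iv)(a)--(b), leaving
\[
\text{Re}(\mathbf{A}\Phi_0, \Phi_0)_{\mathbf{H}} = -\|\nabla u_0\|^2_{\Omega_f} \leq 0.
\]

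For the range condition, fix $\lambda > 0$ and let $F = [f, g_{01}, g_{11}, \ldots, g_{0K}, g_{1K}, p, q] \in \mathbf{H}$. The velocity entries of the resolvent equation $(\lambda I - \mathbf{A})\Phi = F$ are eliminated via $h_{1j} = \lambda h_{0j} - g_{0j}$ and $w_1 = \lambda w_0 - p$, reducing the problem to a coupled elliptic system for $(u_0, h_{01}, \ldots, h_{0K}, w_0)$: a Helmholtz equation on $\Omega_f$, a Helmholtz-type equation on each face $\Gamma_j$ carrying the flux jump $\partial u_0/\partial\nu - \partial w_0/\partial\nu$ as forcing, and a Helmholtz equation on $\Omega_s$, coupled through the essential conditions $u_0|_{\Gamma_f}=0$ and $u_0|_{\Gamma_j} = \lambda h_{0j} - g_{0j} = \lambda w_0|_{\Gamma_j} - p|_{\Gamma_j}$, together with the edge matching of the $h_{0j}$. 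After lifting the inhomogeneous traces, the problem is posed variationally on the closed subspace
\[
V = \Bigl\{(\phi, \eta_1, \ldots, \eta_K, \psi) : \ \phi|_{\Gamma_f}=0, \ \phi|_{\Gamma_j} = \lambda \eta_j = \lambda \psi|_{\Gamma_j}, \ \eta_j = \eta_l \text{ on } \partial\Gamma_j \cap \partial\Gamma_l\Bigr\}
\]
of $H^1(\Omega_f) \times \prod_{j=1}^K H^1(\Gamma_j) \times H^1(\Omega_s)$. The relative factor of $\lambda$ in the $V$-constraint dictates a scaling of the three test equations that forces the $\partial_\nu u_0$ and $\partial_\nu w_0$ surface traces to cancel across components; the resulting bilinear form is continuous and, via a Poincar\'e inequality on $\Omega_f$ (justified by $\phi|_{\Gamma_f}=0$) together with the $H^1$-norms on the remaining blocks, coercive on $V$. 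Lax--Milgram then delivers a unique weak solution.

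The delicate final step, and the main obstacle, is to identify the weak solution with an element of $D(\mathbf{A})$. Testing against $V$-tuples supported in the open sets $\Omega_f$, $\Omega_s$, and each open face $\Gamma_j$ recovers the three PDEs distributionally, yielding (A.ii)(a)--(b). Substituting these interior identities back into the variational equation and reversing the integrations by parts annihilates every interior contribution, leaving only
\[
\sum_{j} \Bigl\langle \tfrac{\partial h_{0j}}{\partial n_j}, \eta_j \Bigr\rangle_{\partial\Gamma_j} = 0
\]
for every admissible $V$-tuple. Choosing test tuples whose edge trace is concentrated on a single shared edge --- prescribed first along the edge, then lifted to a face, and finally to the bulk using surjectivity of the trace maps --- one extracts $\partial h_{0j}/\partial n_j \in H^{-1/2}(\partial \Gamma_j)$ (condition (A.ii)(c)) and, by pairing the two face contributions meeting at each edge, the balance law $\partial h_{0j}/\partial n_j = -\partial h_{0l}/\partial n_l$ on $\partial \Gamma_j \cap \partial \Gamma_l$ (condition (A.iv)(b)). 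This step is delicate precisely because Sobolev regularity on the one-dimensional skeleton $\bigcup_j \partial \Gamma_j$ is the weakest in the problem, and one must verify that the $V$-constraint does not secretly couple independent edge test directions. Once this is done, Lumer--Phillips produces the asserted $C_0$-semigroup of contractions.
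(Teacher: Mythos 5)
Your proposal is correct and follows the same overall architecture as the paper: Lumer--Phillips, with dissipativity obtained exactly as you describe (purely imaginary cross-terms, cancellation of the $\Gamma_s$ fluxes against the thin-layer forcing via (A.iii), telescoping of the edge integrals via (A.iv)), maximality via Lax--Milgram on a trace-constrained product space, and the same delicate final step of recovering (A.ii)(c) and the edge balance (A.iv)(b) by testing with functions prescribed on a single shared edge and then lifted to the two adjacent faces and to the bulk domains. The one substantive difference is the direction of the elimination in the resolvent system. You eliminate the velocities and solve for the displacement block $(u_0,h_{01},\dots,h_{0K},w_0)$ on a space $V$ whose trace constraint carries the factor $\lambda$ and is inhomogeneous in the data, which forces your lifting step and the $\lambda$-rescaling of the tested equations needed to cancel the $\partial u_0/\partial\nu$ and $\partial w_0/\partial\nu$ traces. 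The paper instead eliminates the displacements via $h_{0j}=(h_{1j}+h_{0j}^{\ast})/\lambda$ and $w_0=(w_1+w_0^{\ast})/\lambda$ and solves for the velocity block $[u_0,h_{11},\dots,h_{1K},w_1]$ in the space $\mathbf{W}$ of (\ref{W}), whose constraint $\varphi|_{\Gamma_j}=\psi_j=\xi|_{\Gamma_j}$ is homogeneous, $\lambda$-independent, and coincides verbatim with the kinematic condition (A.iii); this makes the flux cancellation automatic and avoids any lifting of inhomogeneous traces. Both reductions yield a continuous, coercive bilinear form and lead to the same subsequent bootstrapping back into $D(\mathbf{A})$; the paper's choice of retained variables is simply the cleaner of the two.
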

After proving the existence and uniqueness of the solution, in our second result, we investigate the long term analysis of this solution. Our main goal here is to show that the solution to the system (\ref{2a})-(\ref{IC}) is strongly stable, which is given as follows: 

\begin{theorem}
\label{SS} For the modeling generator $\mathbf{A}:D(\mathbf{A})\subset \mathbf{H}%
\rightarrow \mathbf{H}$ of (\ref{2a})-(\ref{IC}), one has $ \sigma(\mathbf{A})\cap i\mathbb{R} $. Consequently, the $C_{0}-$semigroup $\left\{ e^{\mathbf{A}t}\right\} _{t\geq 0}$given in Theorem \ref{well} is
strongly stable. That is, the solution $\Phi (t)$ of the PDE (\ref{2a})-(\ref%
{IC}) tends asymptotically to the zero state for all initial data $\Phi
_{0}\in \mathbf{H.}$
\end{theorem}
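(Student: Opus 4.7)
The plan is to appeal to the Arendt--Batty--Lyubich--Vu theorem (as in \cite{A-B}, see also \cite{L-P}): since $\{e^{\mathbf{A}t}\}_{t\geq 0}$ is a bounded $C_{0}$-semigroup on the Hilbert space $\mathbf{H}$ by virtue of being a contraction (Theorem \ref{well}), strong stability follows once one verifies that $\sigma(\mathbf{A})\cap i\mathbb{R}=\emptyset$. I would split this into (a) showing the point spectrum of $\mathbf{A}$ is disjoint from $i\mathbb{R}$, and (b) eliminating the approximate/continuous spectrum on $i\mathbb{R}$; the residual spectrum on $i\mathbb{R}$ can then be handled by applying the same point-spectrum analysis to $\mathbf{A}^{*}$, which carries an analogous dissipative structure.

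For (a), fix $\beta \in \mathbb{R}$ and suppose $\mathbf{A}\Phi = i\beta \Phi$ for some $\Phi =[u,h_{1},\dot h_{1},\ldots,h_{K},\dot h_{K},w,w_{1}]\in D(\mathbf{A})$. The dissipation identity for the heat component forces $\|\nabla u\|_{\Omega_f}=0$, and the Dirichlet condition on $\Gamma_f$ then yields $u\equiv 0$ in $\Omega_f$. Propagation via the kinematic conditions (A.iii) gives $\dot h_{j}=0$ on $\Gamma_j$ and $w_{1}|_{\Gamma_j}=0$; for $\beta\neq 0$ the resolvent relations $\dot h_{j}=i\beta h_{j}$ and $w_{1}=i\beta w$ then force $h_{j}\equiv 0$ and $w|_{\Gamma_{s}}=0$, and inserting this back into the thin-layer equation in \eqref{2.5b} also forces $\partial_\nu w|_{\Gamma_{s}}=0$. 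Thus $w$ satisfies the Helmholtz equation $(\Delta+\beta^{2})w=0$ in $\Omega_{s}$ with zero Cauchy data, and unique continuation (Holmgren) yields $w\equiv 0$. The degenerate case $\beta=0$ reduces to a kernel computation: a Green-identity argument applied to $\Delta w_{0}=0$ in $\Omega_{s}$, together with the flux matching and the edge compatibility (A.iv) (which forces pairwise cancellation of all conormal contributions on $\partial\Gamma_j\cap\partial\Gamma_l$), yields $\nabla w_{0}\equiv 0$ and $h_{0j}\equiv 0$, hence $w_0 \equiv 0$ as well.

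For (b), assume a sequence $\Phi_n\in D(\mathbf{A})$ with $\|\Phi_n\|_{\mathbf{H}}=1$ and $F_n:=(\mathbf{A}-i\beta)\Phi_n\to 0$ in $\mathbf{H}$; the goal is a contradiction. The dissipation estimate combined with Poincar\'e gives $u_n\to 0$ in $H^{1}(\Omega_f)$, and the kinematic coupling $u_n|_{\Gamma_j}=\dot h_{j,n}=w_{1,n}|_{\Gamma_j}$ propagates this to $\dot h_{j,n}\to 0$ in $H^{1/2}(\Gamma_j)$. The resolvent relations, together with the fact that the $h_j$-components of $F_n$ tend to zero in $H^{1}(\Gamma_j)$, then yield $h_{j,n}\to 0$ in $H^{1/2}(\Gamma_j)$ and $w_n|_{\Gamma_{s}}\to 0$ in $H^{1/2}(\Gamma_s)$ when $\beta\neq 0$. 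Testing the thin-layer resolvent equation $(-\Delta+I-\beta^{2})h_{j,n}=\partial_\nu u_n-\partial_\nu w_n + o(1)$ in $L^{2}(\Gamma_j)$ against $\bar h_{j,n}$, summing in $j$, and observing that the $\partial\Gamma_j\cap\partial\Gamma_l$-boundary contributions cancel pairwise by (A.iv)(a)--(b), one upgrades to $h_{j,n}\to 0$ in $H^{1}(\Gamma_j)$, and hence $w_n|_{\Gamma_{s}}\to 0$ strongly in $H^{1}(\Gamma_{s})$.

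The principal obstacle is to then establish $w_n\to 0$ in $H^{1}(\Omega_s)$, which would contradict $\|\Phi_n\|=1$. Here I would invoke a static Rellich-type wave multiplier identity as in \cite{trigg, AG1}: on the convex polyhedron $\Omega_s$, pick $x_0$ in the interior so that the vector field $h(x):=x-x_{0}$ satisfies $h\cdot\nu\geq \alpha>0$ on each face $\Gamma_j$, and multiply the Helmholtz relation $(\Delta+\beta^{2})w_n=o(1)$ in $L^{2}(\Omega_s)$ by $h\cdot\nabla\bar w_n$. Integration by parts produces an identity that controls $\int_{\Gamma_{s}}(h\cdot\nu)|\partial_\nu w_n|^{2}$ in terms of $\|w_n\|_{H^{1}(\Omega_s)}$, $\|w_n|_{\Gamma_{s}}\|_{H^{1}(\Gamma_{s})}$, and an $L^{2}$ remainder; combined with the companion identity obtained by multiplication by $\bar w_n$ (which gives $\beta^{2}\|w_n\|_{\Omega_s}^{2}-\|\nabla w_n\|_{\Omega_s}^{2}\to 0$) and with the already-established strong smallness of the tangential trace $\nabla_{\mathrm{tan}}w_n|_{\Gamma_{s}}$ in $L^{2}(\Gamma_s)$, all remaining boundary cross terms in the Rellich identity vanish in the limit, and one extracts $\|\nabla w_n\|_{\Omega_s}\to 0$. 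This in turn forces $\|\Phi_n\|_{\mathbf{H}}\to 0$, the desired contradiction; the case $\beta = 0$ is dispatched separately using the triviality of $\ker\mathbf{A}$ from (a) together with the bounded invertibility at positive real resolvent parameters inherited from the Lumer--Phillips construction in Theorem~\ref{well}. The chief technical hurdle --- absent from the heat-wave analyses of \cite{Duyckaerts, AvalosLasieckaTriggiani16} --- is precisely this need to control the normal trace $\partial_\nu w_n|_{\Gamma_{s}}$ in $H^{-1/2}$ via wave-type multipliers: the thin-layer equation only transmits the flux discrepancy rather than damping it, so uniform-stabilization multiplier identities must step in as a substitute for the thermal smoothing that would otherwise supply this trace estimate for free.
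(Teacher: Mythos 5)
Your overall strategy (Arendt--Batty, treating the point and approximate spectrum of $\mathbf{A}$ and handling the residual spectrum through $\mathbf{A}^{*}$) coincides with the paper's, and your identification of the central difficulty --- the absence of a usable a priori estimate on $\partial_\nu w_n|_{\Gamma_s}$ --- is exactly right. However, your deployment of the Rellich multiplier has a genuine gap. You apply the identity with $m(x)=x-x_0$ directly to $w_n$, whose Dirichlet trace on $\Gamma_s$ is nonzero. For a function merely in $H^{1}(\Omega_s)$ with $\Delta w_n\in L^{2}(\Omega_s)$ on a polyhedron, one only has $\partial_\nu w_n\in H^{-\frac12}(\Gamma_s)$; the boundary integrals $\int_{\Gamma_s}(m\cdot\nu)|\partial_\nu w_n|^{2}$ and the cross term $\int_{\Gamma_s}\partial_\nu w_n\,(m\cdot\nabla\bar w_n)$ in your identity are therefore not even defined, and no $H^{2}(\Omega_s)$ regularity for elements of $D(\mathbf{A})$ is available (this is precisely the open issue flagged in Remark \ref{remark_3}). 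The paper's proof hinges on a device you are missing: it replaces $w_{0n}$ by the shifted variable $z_n=w_{0n}+\frac{i}{\beta}D_s[u_n|_{\Gamma_s}+w^{*}_{0n}|_{\Gamma_s}]$ of (\ref{71}), which solves a Helmholtz problem with \emph{homogeneous} Dirichlet data, so that convexity of $\Omega_s$ and Grisvard's theorem give $z_n\in H^{2}(\Omega_s)$, legitimize the multiplier computation, and yield the uniform $L^{2}(\Gamma_s)$ bound (\ref{76}) on $\partial_\nu z_n$, hence the $H^{-\frac12}(\Gamma_s)$ bound (\ref{78}) on $\partial_\nu w_{0n}$. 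Relatedly, your ordering is off: you claim $h_{j,n}\to 0$ in $H^{1}(\Gamma_j)$ by testing the thin-layer equation against $\bar h_{j,n}$ \emph{before} any multiplier estimate, but that step requires controlling $\langle\partial_\nu w_n,h_{j,n}\rangle_{\Gamma_j}$, which the paper supplies exactly via (\ref{78}); in your outline the multiplier identity appears only afterwards and for a different purpose. Finally, the paper closes its Step 5 not by extracting $\|\nabla w_n\|_{\Omega_s}\to 0$ from the multiplier identity but by passing to a weak limit solving an overdetermined eigenvalue problem and invoking unique continuation; your direct multiplier route could in principle replace this, but only once the regularity obstruction above is resolved.

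A smaller but real issue is the point $\lambda=0$. Triviality of $\ker\mathbf{A}$ together with $(0,\infty)\subset\rho(\mathbf{A})$ from Lumer--Phillips does not imply $0\in\rho(\mathbf{A})$: the resolvent set is open, but nothing prevents $0$ from lying in the continuous spectrum with the resolvent blowing up as $\lambda\to 0^{+}$. The paper establishes $0\in\rho(\mathbf{A})$ by a separate surjectivity argument via Lax--Milgram (Proposition \ref{invert}). Your version still suffices for the strong stability conclusion, since Arendt--Batty only needs $\sigma_p(\mathbf{A})\cap i\mathbb{R}=\emptyset$ together with countability of $\sigma(\mathbf{A})\cap i\mathbb{R}$, but it does not prove the asserted spectral statement $\sigma(\mathbf{A})\cap i\mathbb{R}=\emptyset$.
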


\begin{remark}
\label{remark_1}The wellposedness and stability statements Theorems 1 and 2
are equally valid in the lower dimensional setting $n=2$; i.e., for
multilayered 2D heat -- 1D wave -- 2D wave coupled PDE systems (2)-(5), in
which interface $\Gamma _{s}$ is the boundary of a convex polygonal domain $%
\Omega _{s}$ (and so each segment $\Gamma _{j}$ is a line segment). (Also,
analogous to the present 3D setting, $\Omega _{f}$ is a Lipschitz domain
with $\partial \Omega _{f}=\Gamma _{s}\cup \Gamma _{f}$, with $\overline{%
\Gamma }_{s}\cap \overline{\Gamma }_{f}=\emptyset $.
\end{remark}

\begin{remark}
\label{remark_2}Inasmuch as we wish in future to turn our attention to the
numerical analysis and simulation of solutions of the multilayered PDE
system (2)-(5), the boundary interface is taken here to be polyhedral, with
each polygonal boundary segment $\Gamma _{j}$ having its own wave equation
IC-BVP in variable $h_{j}$. Alternatively, the Theorems 1 and 2 will also
hold true in the case that boundary interface $\Gamma _{s}$ is smooth: in
this case, the \textquotedblleft thin\textquotedblright\ wave equation -- in
solution variable $h$, say -- will have its spatial displacements described
by the Laplace Beltrami operator $\Delta ^{\prime }$. That is, for the
multilayered FSI model on a smooth boundary interface $\Gamma _{s}$, the
thin wave PDE component in (3) is replaced with 
\[
h_{tt}-\Delta ^{\prime }h+h=\left. \frac{\partial w}{\partial \nu }%
\right\vert _{\Gamma _{s}}-\left. \frac{\partial u}{\partial \nu }%
\right\vert _{\Gamma _{s}}\text{ \ on }(0,T)\times \Gamma _{s}\text{, }
\]%
with the matching velocity B.C.'s%
\[
\left. w_{t}\right\vert _{\Gamma _{s}}=h_{t}=\left. u\right\vert _{\Gamma
_{s}}\text{ \ on }(0,T)\times \Gamma _{s}.
\]%
The heat and thick wave PDE components in (2) and (4) respectively are
unchanged. In addition, there are the initial conditions 
\[
\lbrack u(0),h(0),h_{t}(0),w(0),w_{t}(0)]=[u_{0},h_{0},h_{1},w_{0},w_{1}]\in
L^{2}(\Omega _{f})\times H^{1}(\Gamma _{s})\times L^{2}(\Gamma _{s})\times
H^{1}(\Omega _{s})\times L^{2}(\Omega _{s}).
\]%
Also, the initial conditions satisfy the compatibility conditions $\left.
w_{0}\right\vert _{\Gamma _{s}}=h_{0}$.
\end{remark}

\begin{remark}
\label{remark_3}In line with what is observed in [31] and [32], it seems
possible -- at least for certain configurations of the polygonal segments $%
\Gamma _{j}$, $j=1,...K$ -- that the domain $D(\mathbf{A})$ of the
multilayer FSI generator (as prescribed in (A.i)-(A.iv) above) manifests a
regularity higher than that of finite energy; i.e., $D(\mathbf{A})\subset
H^{1}(\Omega _{f})\times H^{1+\rho _{1}}(\Gamma _{1})\times H^{1}(\Gamma
_{1})\times ...\times H^{1+\rho _{1}}(\Gamma _{K})\times H^{1}(\Gamma
_{K})\times H^{1+\rho _{2}}(\Omega _{s})\times H^{1}(\Omega _{s})$, where
parameters $\rho _{1}$,$\rho _{2}>0$. In the course of our future work --
e.g., an analysis of uniform decay properties of the FSI model (2)-(5) --
this higher regularity will be fleshed out. We should note that in the case
of a smooth boundary interface $\Gamma _{s}$ (see Remark \ref{remark_2}), smoothness of
the associated FSI semigroup generator domain comes directly from classic
elliptic regularity. In dimension $n=2$ (see Remark \ref{remark_1}), smoothness of the
semigroup generator domain can be inferred by the work of P. Grisvard; see
e.g., \cite{Grisvard2}, Theorem 2.4.3 of p. 57, along with Remarks 2.4.5 and
2.4.6 therein.
\end{remark}

\section{Wellposedness--\textit{Proof of Theorem \ref{well}}}

This section is devoted to prove the Hadamard well-posedness of the coupled system given in (\ref{2a})-(\ref{IC}). Our proof hinges on the application of the Lumer Phillips Theorem which assures the existence of a $C_{0}$-semigroup of contractions $\left\{ e^{\mathbf{A}%
t}\right\} _{t\geq 0}$ once we establish that $\mathbf{A}$ is maximal
dissipative.\\

\noindent \textbf{\textit{Proof of Theorem \ref{well}:}} In order to prove the maximal
dissipativity of $\mathbf{A}$, we will follow a few steps: \\

\noindent\emph{\textbf{Step 1 (Dissipativity of }}$\mathbf{A}$\emph{) } Given data $\Phi
_{0}$ in (\ref{stat}) to be in $D(\mathbf{A})$,

\begin{eqnarray}
(\mathbf{A}\Phi_0 ,\Phi_0 )_{\mathbf{H}} &=&(\Delta u_{0},u_{0})_{\Omega
_{f}}+\sum\limits_{j=1}^{K}(\nabla h_{1j},\nabla h_{0j})_{\Gamma _{j}} 
\notag \\
&&+\sum\limits_{j=1}^{K}(h_{1j},h_{0j})_{\Gamma
_{j}}+\sum\limits_{j=1}^{K}([\Delta -I]h_{0j},h_{1j})_{\Gamma _{j}}  \notag
\\
&&+\sum\limits_{j=1}^{K}\left\langle \frac{\partial w_{0}}{\partial \nu }%
,h_{1j}\right\rangle _{\Gamma _{j}}-\sum\limits_{j=1}^{K}\left\langle \frac{%
\partial u_{0}}{\partial \nu },h_{1j}\right\rangle _{\Gamma _{j}}  \notag \\
&&+(\nabla w_{1},\nabla w_{0})_{\Omega _{s}}+(\Delta w_{0},w_{1})_{\Omega
_{s}}  \notag \\
&=&-(\nabla u_{0},\nabla u_{0})_{\Omega _{f}}+\left\langle \frac{\partial }{%
\partial \nu }u_{0},u_{0}\right\rangle _{\Gamma _{s}}  \notag \\
&&+\sum\limits_{j=1}^{K}(\nabla h_{1j},\nabla h_{0j})_{\Gamma
_{j}}+\sum\limits_{j=1}^{K}(h_{1j},h_{0j})_{\Gamma _{j}}  \notag \\
&&-\sum\limits_{j=1}^{K}\overline{(\nabla h_{1j},\nabla h_{0j})}_{\Gamma
_{j}}-\sum\limits_{j=1}^{K}\overline{(h_{1j},h_{0j})}_{\Gamma
_{j}}+\sum\limits_{j=1}^{K}\left( \frac{\partial h_{0j}}{\partial n_{j}}%
,h_{1j}\right) _{\partial \Gamma _{j}}  \notag \\
&&+\sum\limits_{j=1}^{k}(\frac{\partial w_{0}}{\partial \nu }%
,h_{1j})_{\Gamma _{j}}-\sum\limits_{j=1}^{k}\left\langle \frac{\partial
u_{0}}{\partial \nu },h_{1j}\right\rangle _{\Gamma _{j}}  \notag \\
&&+(\nabla w_{1},\nabla w_{0})_{\Omega _{s}}-\overline{(\nabla w_{1},\nabla
w_{0})}_{\Omega _{s}}-\left\langle \frac{\partial w_{0}}{\partial \nu }%
,w_{1}\right\rangle _{\Gamma _{s}}.  \label{ten}
\end{eqnarray}%
(In the last expression, we are implicitly using the fact the unit normal
vector $\nu $ is \emph{interior }with respect to $\Gamma _{s}$.) Note now
via domain criterion (A.iv),we have for fixed index $j$, $1\leq j\leq K$, 
\begin{equation*}
\left( \frac{\partial h_{0j}}{\partial n_{j}},h_{1j}\right) _{\partial
\Gamma _{j}}=\sum\limits_{\substack{ 1\leq l\leq K \\ \partial \Gamma
_{j}\cap \partial \Gamma _{l}\neq \emptyset }}-\left( \frac{\partial h_{0l}}{%
\partial n_{l}},h_{1l}\right) _{\partial \Gamma _{j}\cap \partial \Gamma
_{l}}.
\end{equation*}%
Such relation gives then the inference 
\begin{equation}
\sum\limits_{j=1}^{K}\left( \frac{\partial h_{0j}}{\partial n_{j}}%
,h_{1j}\right) _{\partial \Gamma _{j}}=0.  \label{10.5}
\end{equation}%
Applying this relation and domain criterion (A.iii) to (\ref{ten}), we then
have%
\begin{equation}
\begin{array}{l}
(\mathbf{A}\Phi_0,\Phi_0 )_{\mathbf{H}}=-||\nabla u_{0}||_{_{\Omega _{f}}}^{2}
\\ 
\text{ \ \ }+2i\sum\limits_{j=1}^{K} \text{Im}(\nabla h_{1j},\nabla
h_{0j})_{\Gamma _{j}}+2i\sum\limits_{j=1}^{K}\text{Im}(h_{1j},h_{0j})_{%
\Gamma _{j}} \\ 
\text{ \ \ \ }+2i\text{Im}(\nabla w_{1},\nabla w_{0})_{\Omega _{s}},%
\end{array}
\label{dissi}
\end{equation}
which gives $$\text{Re}(\mathbf{A}\Phi ,\Phi )_{\mathbf{H}}\leq 0.$$ \\
\noindent\emph{\textbf{Step 2 (The Maximality of} }$\mathbf{A}$\emph{)}  Given parameter $%
\lambda >0$, suppose $\Phi =\left[ u_{0},h_{01},h_{11},\ldots
,h_{0K},h_{1K},w_{0},w_{1}\right] \in D(\mathbf{A})$ is a solution of the
equation%
\begin{equation}
(\lambda I -\mathbf{A})\Phi =\Phi ^{\ast },  \label{a1}
\end{equation}%
where $\Phi ^{\ast }=\left[ u_{0}^{\ast },h_{01}^{\ast },h_{11}^{\ast
},\ldots ,h_{0K}^{\ast },h_{1K}^{\ast },w_{0}^{\ast },w_{1}^{\ast }\right]
\in \mathbf{H}$. Then in PDE terms, the abstract equation (\ref{a1}) becomes%
\begin{equation}
\left\{ 
\begin{array}{l}
\lambda u_{0}-\Delta u_{0} =u_{0}^{\ast }\text{ \ \ in \ \ \ }\Omega _{f} \\
u_{0}|_{\Gamma _{f}} =0\text{ \ \ on \ \ }\Gamma _{f};  
\end{array}
\right.  \label{p1}
\end{equation}%
and for $1\leq j\leq K,$%
\begin{equation}
\left\{ 
\begin{array}{l}
\lambda h_{0j}-h_{1j}=h_{0j}^{\ast }\text{ \ \ in \ \ }\Gamma _{j} \\ 
\lambda h_{1j}-\Delta h_{0j}+h_{0j}-\dfrac{\partial w_{0}}{\partial \nu }+%
\dfrac{\partial u_{0}}{\partial \nu }=h_{1j}^{\ast }\text{\ \ \ in \ \ }%
\Gamma _{j} \\ 
u_{0}|_{\Gamma _{j}}=h_{1j}=w_{1}|_{\Gamma _{j}}\text{\ \ \ in \ \ }\Gamma
_{j} \\ 
h_{0j}|_{\partial \Gamma _{j}\cap \partial \Gamma _{l}}=h_{0l}|_{\partial
\Gamma _{j}\cap \partial \Gamma _{l}}\text{ on \ }\partial \Gamma _{j}\cap
\partial \Gamma _{l}\text{, for all }1\leq l\leq K\text{ such that }\partial
\Gamma _{j}\cap \partial \Gamma _{l}\neq \emptyset \\  \label{p2}
\left. \dfrac{\partial h_{0j}}{\partial n_{j}}\right\vert _{\partial \Gamma
_{j}\cap \partial \Gamma _{l}}=-\left. \dfrac{\partial h_{0_{l}}}{\partial
n_{l}}\right\vert _{\partial \Gamma _{j}\cap \partial \Gamma _{l}}\text{ on
\ }\partial \Gamma _{j}\cap \partial \Gamma _{l}\text{, for all }1\leq l\leq
K\text{ such that }\partial \Gamma _{j}\cap \partial \Gamma _{l}\neq
\emptyset ;%
\end{array}
\right. 
\end{equation}
and also%
\begin{equation}
\left\{ 
\begin{array}{l}
\lambda w_{0}-w_{1} =w_{0}^{\ast }\text{ \ \ in \ \ \ }\Omega _{s}  
\\
\lambda w_{1}-\Delta w_{0} =w_{1}^{\ast }\text{ \ \ in \ \ \ }\Omega _{s}.
\end{array}
\right. \label{p3}
\end{equation}

\medskip

\noindent With respect to this static PDE system, we multiply the heat equation in (%
\ref{p1}) by test function $\varphi \in H_{\Gamma _{f}}^{1}(\Omega _{f})$,
where%
\begin{equation*}
H_{\Gamma _{f}}^{1}(\Omega _{f})=\left\{ \zeta \in H^{1}(\Omega _{f}):\zeta
|_{\Gamma _{f}}=0\right\} .  \label{4}
\end{equation*}%
Upon integrating and invoking Green's Theorem, then solution component $%
u_{0} $ satisfies the variational relation, 
\begin{equation}
\lambda (u_{0},\varphi )_{\Omega _{f}}+(\nabla u_{0},\nabla \varphi
)_{\Omega _{f}}-\left\langle \frac{\partial u_{0}}{\partial v},\varphi
\right\rangle _{\Gamma _{s}}=(u_{0}^{\ast },\varphi )_{\Omega _{f}}\text{ \
for }\varphi \in H_{\Gamma _{f}}^{1}(\Omega _{f})\text{.}  \label{5}
\end{equation}
In addition, define Hilbert space $\mathcal{V}$ by%
\begin{eqnarray}
\mathcal{V} &=&\left\{ \left[ \psi _{1},...,\psi _{K}\right] \in
H^{1}(\Gamma _{1})\times ...\times H^{1}(\Gamma _{K}):\right. \text{For\ all \ } 1\leq j\leq K, \notag \\
&&\left. \psi _{j}|_{\partial \Gamma _{j}\cap \partial \Gamma _{l}}=\psi
_{l}|_{\partial \Gamma _{j}\cap \partial \Gamma _{l}}\text{ on \ }\partial
\Gamma _{j}\cap \partial \Gamma _{l}\text{, for all }1\leq l\leq K\text{
such that }\partial \Gamma _{j}\cap \partial \Gamma _{l}\neq \emptyset
\right\}  \label{V}
\end{eqnarray}

\medskip

\noindent Therewith, we multiply both sides of the $h_{0j}$-wave equation in (\ref{p2}%
) by component $\psi _{j}$ of $\mathbf{\psi }\in \mathcal{V}$, for $1\leq
j\leq K$. Upon integration we have for $\mathbf{\psi }\in \mathcal{V}$,%
\begin{equation*}
\left[ 
\begin{array}{c}
\lambda (h_{11},\psi _{1})_{\Gamma _{1}}-(\Delta h_{01},\psi _{1})_{\Gamma
_{1}}+(h_{01},\psi _{1})_{\Gamma _{1}}-(\frac{\partial }{\partial \nu }%
w_{0},\psi _{1})_{\Gamma _{1}}+(\frac{\partial }{\partial \nu }u_{0},\psi
_{1})_{\Gamma _{1}} \\ 
\vdots \\ 
\lambda (h_{1K},\psi _{K})_{\Gamma _{K}}-(\Delta h_{0K},\psi _{K})_{\Gamma
_{K}}+(h_{0K},\psi _{K})_{\Gamma _{K}}-(\frac{\partial }{\partial \nu }%
w_{0},\psi _{K})_{\Gamma _{K}}+(\frac{\partial }{\partial \nu }u_{0},\psi
_{K})_{\Gamma _{K}}%
\end{array}%
\right] =%
\left[ 
\begin{array}{c}
(h_{11}^{\ast },\psi _{1})_{\Gamma _{1}} \\ 
\vdots \\ 
(h_{1K}^{\ast },\psi _{K})_{\Gamma _{K}}%
\end{array}%
\right] 
\end{equation*}

For each vector component, we subsequently integrate by parts while invoking
the resolvent relations in (\ref{p2}) (and using the domain criterion
(A.iv.b)). Summing up the components of the resulting vectors, we see that
the solution component $\left[ h_{11},...,h_{1K}\right] \in \mathcal{V}$ of (%
\ref{a1}) satisfies 
\begin{eqnarray}
&&\sum\limits_{j=1}^{K}\left[ \lambda (h_{1j},\psi _{j})_{\Gamma _{j}}+%
\frac{1}{\lambda }(\nabla h_{1j},\nabla \psi _{j})_{\Gamma _{j}}+\frac{1}{%
\lambda }(h_{1j},\psi _{j})_{\Gamma _{j}}+(\frac{\partial }{\partial \nu }%
u_{0}-\frac{\partial }{\partial \nu }w_{0},\psi _{j})_{\Gamma _{j}}\right] 
\notag \\
&&\text{ \ \ \ \ \ }=\sum\limits_{j=1}^{K}\left[ (h_{1j}^{\ast },\psi
_{j})_{\Gamma _{j}}-\frac{1}{\lambda }(h_{0j}^{\ast },\psi _{j})_{\Gamma
_{j}}-\frac{1}{\lambda }(\nabla h_{0j}^{\ast },\nabla \psi _{j})_{\Gamma
_{j}}\right] \text{, \ for }\mathbf{\psi }\in \mathcal{V}.  \label{6}
\end{eqnarray}

\medskip

\noindent Moreover, multiplying the both sides of the wave equation in (\ref{p3}) by $%
\xi \in H^{1}(\Omega _{s})$, and integrating by parts -- while using the
resolvent relations in (\ref{p3}) -- we see that the solution component $%
w_{1}$ of (\ref{a1}) satisfies%
\begin{equation}
\lambda (w_{1},\xi )_{\Omega _{s}}+\frac{1}{\lambda }(\nabla w_{1},\nabla
\xi )_{\Omega _{s}}+(\frac{\partial }{\partial \nu }w_{0},\xi )_{\Gamma
_{s}}=(w_{1}^{\ast },\xi )_{\Omega _{s}}-\frac{1}{\lambda }(\nabla
w_{0}^{\ast },\nabla \xi )_{\Omega _{s}}\text{, for }\xi \in H^{1}(\Omega
_{s}).  \label{7}
\end{equation}

\smallskip

\noindent Set now%
$$ \mathbf{W} \equiv \left\{ [\varphi ,\psi _{1},...,\psi _{K},\xi ]\in
H_{\Gamma _{f}}^{1}(\Omega _{f})\times \mathcal{V}\times H^{1}(\Omega
_{s}):\varphi |_{\Gamma _{j}}=\psi _{j}=\xi |_{\Gamma _{j}},\text{ for\ } 1\leq j\leq K%
\right\};$$ 
\begin{equation} \left\Vert \lbrack \varphi ,\psi _{1},...,\psi _{K},\xi ]\right\Vert _{%
\mathbf{W}}^{2} =\left\Vert \nabla \varphi \right\Vert _{\Omega
_{f}}^{2}+\sum\limits_{j=1}^{K}\left[ \left\Vert \nabla \psi
_{j}\right\Vert _{\Gamma _{j}}^{2}+\left\Vert \psi _{j}\right\Vert _{\Gamma
_{j}}^{2}\right] +\left\Vert \nabla \xi\right\Vert _{\Omega _{s}}^{2}.
\label{W} \end{equation}

\noindent With respect to this Hilbert space, we have the following conclusion, upon
adding (\ref{5}), (\ref{6}) and (\ref{7}): if $\Phi =\left[
u_{0},h_{01},h_{11},\ldots ,h_{0k},h_{1k},w_{0},w_{1}\right] \in D(\mathbf{A}%
)$ solves (\ref{a1}), then necessarily its solution components $\left[
u_{0},h_{11},\ldots ,h_{1K},w_{1}\right] \in \mathbf{W}$ satisfy \ for $%
\left[ \varphi ,\mathbf{\psi },\xi \right] \in \mathbf{W}$,%
\begin{equation}
\begin{array}{l}
\lambda (u_{0},\varphi )_{\Omega _{f}}+(\nabla u_{0},\nabla \varphi
)_{\Omega _{f}}+\lambda (w_{1},\xi )_{\Omega _{s}}+\frac{1}{\lambda }(\nabla
w_{1},\nabla \xi )_{\Omega _{s}} \\ 
\text{ \ }+\sum\limits_{j=1}^{K}\left[ \lambda (h_{1j},\psi _{j})_{\Gamma
_{j}}+\frac{1}{\lambda }(\nabla h_{1j},\nabla \psi _{j})_{\Gamma _{j}}+\frac{%
1}{\lambda }(h_{1j},\psi _{j})_{\Gamma _{j}}\right] =\mathbf{F}_{\lambda
}\left( \left[ 
\begin{array}{c}
\varphi \\ 
\mathbf{\psi } \\ 
\xi%
\end{array}%
\right] \right) ;%
\end{array}
\label{8}
\end{equation}
where 
\begin{equation}
\mathbf{F}_{\lambda }\left( \left[ 
\begin{array}{c}
\varphi \\ 
\mathbf{\psi } \\ 
\xi%
\end{array}%
\right] \right) =(u_{0}^{\ast },\varphi )_{\Omega
_{f}}+\sum\limits_{j=1}^{K}\left[ (h_{1j}^{\ast },\psi _{j})_{\Gamma _{K}}-%
\frac{1}{\lambda }(h_{0j}^{\ast },\psi _{j})_{\Gamma _{j}}-\frac{1}{\lambda }%
(\nabla h_{0j}^{\ast },\nabla \psi _{j})_{\Gamma _{j}}\right] +(w_{1}^{\ast
},\xi )_{\Omega _{s}}-\frac{1}{\lambda }(\nabla w_{0}^{\ast },\nabla \xi
)_{\Omega _{s}}.  \label{F}
\end{equation}

\medskip

\noindent In sum, in order to recover the solution $\Phi =\left[ u_{0},h_{01},h_{11},%
\ldots ,h_{0K},h_{1K},w_{0},w_{1}\right] \in D(\mathbf{A})$ to (\ref{a1}),
one can straightaway apply the Lax-Milgram Theorem to the operator $\mathbf{B%
}\in \mathcal{L}(\mathbf{W},\mathbf{W}^{\ast })$, given by

\begin{eqnarray}
&&\left\langle \mathbf{B}\left[ 
\begin{array}{c}
\varphi \\ 
\psi _{1} \\ 
\vdots \\ 
\psi _{k} \\ 
\xi%
\end{array}%
\right] ,\left[ 
\begin{array}{c}
\widetilde{\varphi } \\ 
\widetilde{\psi }_{1} \\ 
\vdots \\ 
\widetilde{\psi }_{k} \\ 
\widetilde{\xi }%
\end{array}%
\right] \right\rangle _{\mathbf{W}^{\ast }\times \mathbf{W}} = \lambda (\varphi ,\widetilde{\varphi })_{\Omega _{f}}+(\nabla \varphi
,\nabla \widetilde{\varphi })_{\Omega _{f}}+\lambda (\xi ,\widetilde{\xi }%
)_{\Omega _{s}}+\frac{1}{\lambda }(\nabla \xi ,\widetilde{\nabla \xi }%
)_{\Omega _{s}} \notag
 \end{eqnarray} 
 $$
+\sum\limits_{j=1}^{K}\left[ \lambda (\psi _{j},\widetilde{\psi }%
_{j})_{\Gamma _{j}}+\frac{1}{\lambda }(\nabla \psi _{j},\widetilde{\nabla
\psi }_{j})_{\Gamma _{j}}+\frac{1}{\lambda }(\psi _{j},\widetilde{\psi }%
_{j})_{\Gamma _{j}}\right] .  \label{8.5} $$
It is clear that $\mathbf{B}\in \mathcal{L}(\mathbf{W},\mathbf{W}^{\ast })$
is $\mathbf{W}$-elliptic; so by the Lax-Milgram Theorem, the equation (\ref%
{8}) has a unique solution 
\begin{equation}
\left[ u_{0},h_{11},\ldots ,h_{1K},w_{1}\right] \in \mathbf{W}.  \label{9}
\end{equation}%
Subsequently, we set%
\begin{equation}
\left\{ 
\begin{array}{l}
h_{0j}=\frac{h_{1j}+h_{0j}^{\ast }}{\lambda }\text{,\ for }1\leq j\leq K,%
\\ 
w_{0}=\frac{w_{1}+w_{0}^{\ast }}{\lambda }.%
\end{array}%
\right.  \label{10}
\end{equation}%
In particular, since the data $\left[ u_{0}^{\ast },h_{01}^{\ast
},h_{11}^{\ast },\ldots ,h_{0k}^{\ast },h_{1k}^{\ast },w_{0}^{\ast
},w_{1}^{\ast }\right] \in \mathbf{H},$ then the relations in (\ref{10})
give that 
\begin{equation}
w_{0}|_{\Gamma _{j}}=h_{0j},\text{ \ \ }1\leq j\leq K.  \label{18}
\end{equation}

\medskip

\noindent We further show that the dependent variable $\Phi =\left[
u_{0},h_{01},h_{11},\ldots ,h_{0k},h_{1k},w_{0},w_{1}\right] ,$ given by the
solution of (\ref{8}) and (\ref{10}), is an element of $D(\mathbf{A})$:
If we take $[\varphi ,0,\ldots ,0,0]\in \mathbf{W}$ in (\ref{8}), where $%
\varphi \in \mathcal{D}(\Omega _{f})$, then we have%
\begin{equation*}
\lambda (u_{0},\varphi )_{\Omega _{f}}-(\Delta u_{0},\varphi )_{\Omega
_{f}}=(u_{0}^{\ast },\varphi )_{\Omega _{f}}\text{\ \ \ }\forall \text{ }%
\varphi \in \mathcal{D}(\Omega _{f}),
\end{equation*}%
whence%
\begin{equation}
\lambda u_{0}-\Delta u_{0}=u_{0}^{\ast }\text{ in }L^{2}(\Omega _{f}).
\label{11}
\end{equation}%
Subsequently, the fact that $\left\{ \Delta u_{0},u_{0}\right\} \in
L^{2}(\Omega _{f})\times H^{1}(\Omega _{f})$ gives 
\begin{equation}
\frac{\partial u_{0}}{\partial v}|_{\Gamma _{s}}\in H^{-\frac{1}{2}}(\Gamma
_{s}).  \label{12}
\end{equation}%
In turn, using the relations in (\ref{10}), if we take $[0,0,\ldots ,0,\xi
]\in \mathbf{W}$, where $\xi \in \mathcal{D}(\Omega _{s})$, then upon
integrating by parts, we have%
\begin{equation*}
\lambda (w_{1},\xi )_{\Omega _{s}}-(\Delta w_{0},\xi )_{\Omega
_{s}}=(w_{1}^{\ast },\xi )_{\Omega _{s}}\text{ \ \ \ }\forall \text{ }\xi
\in \mathcal{D}(\Omega _{s}),
\end{equation*}%
and so 
\begin{equation}
\lambda w_{1}-\Delta w_{0}=w_{1}^{\ast }\text{ \ \ in \ }L^{2}(\Omega _{s}),
\label{13}
\end{equation}%
which gives that $\left\{ \Delta w_{0},w_{0}\right\} \in L^{2}(\Omega
_{s})\times H^{1}(\Omega _{s})$. A subsequent integration by parts yields
that 
\begin{equation}
\frac{\partial w_{0}}{\partial v}|_{\Gamma _{s}}\in H^{-\frac{1}{2}}(\Gamma
_{s}).  \label{14}
\end{equation}

\medskip

\noindent Moreover, let $\gamma _{s}^{+}\in \mathcal{L}(H^{\frac{1}{2}}(\Gamma
_{s}),H^{1}(\Omega _{s}))$ be the right continuous inverse for the Sobolev
trace map $\gamma _{s}\in \mathcal{L}(H^{1}(\Omega _{s}),H^{\frac{1}{2}}(\Gamma _{s}))$%
; viz.,%
\begin{equation*}
\gamma _{s}(f)=f|_{\Gamma _{s}}\text{ \ for }f\in C^{\infty }(\overline{%
\Omega _{s}}).
\end{equation*}%
Likewise, let $\gamma _{f}^{+}\in \mathcal{L}(H^{\frac{1}{2}}(\Gamma
_{s}),H_{\Gamma _{f}}^{1}(\Omega _{f}))$ denote the right inverse for the
Sobolev trace map \newline
$\gamma _{f}\in \mathcal{L}(H_{\Gamma _{f}}^{1}(\Omega _{f}),H^{\frac{1}{2}%
}(\Gamma _{s})).$ Also, for given $\psi _{j}\in H_{0}^{1}(\Gamma _{j}),$ \ $%
1\leq j\leq K,$ let%
\begin{equation}
\left( \psi _{j}\right) _{ext}(x)\equiv \left\{ 
\begin{array}{l}
\psi _{j},\text{ \ \ }x\in \Gamma _{j} \\ 
0,\text{ \ \ }x\in \Gamma _{s}\backslash \Gamma _{j}.%
\end{array}%
\right.  \label{15}
\end{equation}%
Then $\left( \psi _{j}\right) _{ext}\in H^{\frac{1}{2}}(\Gamma _{s})$ \ for
all $1\leq j\leq K$. We now specify test function $[\varphi ,\psi
_{1},...,\psi _{K},\xi ]\in \mathbf{W}$ in (\ref{8}): namely, $\psi _{j}\in
H_{0}^{1}(\Gamma _{j}),$ \ $1\leq j\leq K$, and 
\begin{equation}
\varphi \equiv \gamma _{f}^{+}\left[ \sum\limits_{j=1}^{K}\left( \psi
_{j}\right) _{ext}\right] ,\text{ \ \ \ \ \ \ \ \ }\xi \equiv \gamma _{s}^{+}%
\left[ \sum\limits_{j=1}^{K}\left( \psi _{j}\right) _{ext}\right] .
\label{16}
\end{equation}%
Therewith we have verbatim from (\ref{8}),%
\begin{eqnarray*}
&&\lambda (u_{0},\varphi )_{\Omega _{f}}+(\nabla u_{0},\nabla \varphi
)_{\Omega _{f}} \\
&&+\sum\limits_{j=1}^{K}\left[ \lambda (h_{1j},\psi _{j})_{\Gamma _{j}}+%
\frac{1}{\lambda }(\nabla h_{1j},\nabla \psi _{j})_{\Gamma _{j}}+\frac{1}{%
\lambda }(h_{1j},\psi _{j})_{\Gamma _{j}}\right] \\
&&+\lambda (w_{1},\xi )_{\Omega _{s}}+\frac{1}{\lambda }(\nabla w_{1},\nabla
\xi )_{\Omega _{s}} \\
&=&(u_{0}^{\ast },\varphi )_{\Omega _{f}}+\sum\limits_{j=1}^{k}\left[
(h_{1j}^{\ast },\psi _{j})_{\Gamma _{j}}-\frac{1}{\lambda }(\nabla
h_{0j}^{\ast },\nabla \psi _{j})_{\Gamma _{j}}-\frac{1}{\lambda }%
(h_{0j}^{\ast },\psi _{j})_{\Gamma _{j}}\right] \\
&&+(w_{1}^{\ast },\xi )_{\Omega _{s}}-\frac{1}{\lambda }(\nabla w_{0}^{\ast
},\nabla \xi )_{\Omega _{s}}.
\end{eqnarray*}%
Upon integrating by parts, and invoking the relations in (\ref{10}), as well
as (\ref{11})-(\ref{14}), we get%
\begin{equation}
\left\langle \frac{\partial u_{0}}{\partial \nu },\varphi \right\rangle
_{\Gamma _{s}}+\sum\limits_{j=1}^{K}\left[ \lambda (h_{1j},\psi
_{j})_{\Gamma _{j}}-(\Delta h_{0j},\psi _{j})_{\Gamma _{j}}+(h_{0j},\psi
_{j})_{\Gamma _{j}}\right] -\left\langle \frac{\partial w_{0}}{\partial \nu }%
,\xi \right\rangle _{\Gamma _{s}}=\sum\limits_{j=1}^{K}(h_{1j}^{\ast },\psi
_{j})_{\Gamma _{j}}.  \label{inter}
\end{equation}%
Since each test function component $\psi _{j}\in H_{0}^{1}(\Gamma _{j})$ is
arbitrary, we then deduce from this relation and (\ref{15})-(\ref{16}) that
each $h_{0j}$ solves%
\begin{equation}
\lambda h_{1j}-\Delta h_{0j}+h_{0j}-\frac{\partial w_{0}}{\partial \nu }+%
\frac{\partial u_{0}}{\partial \nu }=h_{1j}^{\ast }\text{ \ \ in \ }\Gamma
_{j},\text{ \ \ }1\leq j\leq K.  \label{17}
\end{equation}

\medskip

\noindent In addition, we have from (\ref{17}), (\ref{9}), (\ref{12}), and (\ref{14})
that $\left\{ \Delta h_{0j},h_{0j}\right\} \in \lbrack H^{1}(\Gamma
_{j})]^{\prime }\times H^{1}(\Gamma _{j})$, for $1\leq j\leq K$.
Consequently, an integration by parts gives that 
\begin{equation}
\frac{\partial h_{0j}}{\partial n_{j}}\in H^{-\frac{1}{2}}(\partial \Gamma
_{j})\text{, \ for }1\leq j\leq K.  \label{19}
\end{equation}

\medskip

\noindent Finally: Let given indices $j^{\ast },l^{\ast }$, $1\leq j^{\ast },l^{\ast
}\leq K$, satisfy $\partial \Gamma _{j^{\ast }}\cap $ $\partial \Gamma
_{l^{\ast }}\neq \emptyset $. Let $g$ be a given element in $H_{0}^{\frac{1}{%
2}+\epsilon }(\partial \Gamma _{j^{\ast }}\cap $ $\partial \Gamma _{l^{\ast
}})$. Then one has that $\tilde{g}_{j^{\ast }}\in H^{\frac{1}{2}+\epsilon
}(\partial \Gamma _{j^{\ast }})$ and $\tilde{g}_{l^{\ast }}\in H^{\frac{1}{2}%
+\epsilon }(\partial \Gamma _{l^{\ast }})$, where%
\begin{equation*}
\tilde{g}_{j^{\ast }}(x)\equiv\left\{ 
\begin{array}{l}
g(x)\text{, }x\in \partial \Gamma _{j^{\ast }}\cap \partial \Gamma _{l^{\ast
}} \\ 
0\text{, }x\in \partial \Gamma _{j^{\ast }}\backslash \left( \partial \Gamma
_{j^{\ast }}\cap \partial \Gamma _{l^{\ast }}\right) ;%
\end{array}%
\right. \text{ \ \ }\tilde{g}_{l^{\ast }}(x)\equiv\left\{ 
\begin{array}{l}
g(x)\text{, }x\in \partial \Gamma _{j^{\ast }}\cap \partial \Gamma _{l^{\ast
}} \\ 
0\text{, }x\in \partial \Gamma _{l^{\ast }}\backslash \left( \partial \Gamma
_{j^{\ast }}\cap \partial \Gamma _{l^{\ast }}\right)%
\end{array}%
\right.
\end{equation*}%
(see e.g., Theorem 3.33, p. 95 of \cite{mclean}). Subsequently, by the
(limited) surjectivity of the Sobolev Trace Map on Lipschitz domains-- see
e.g., Theorem 3.38, p.102 of \cite{mclean} -- there exists $\psi _{j^{\ast
}}\in H^{1+\epsilon }(\Gamma _{j^{\ast }})$ and $\psi _{l^{\ast }}\in
H^{1+\epsilon }(\Gamma _{l^{\ast }})$ such that 
\begin{equation}
\left. \psi _{j^{\ast }}\right\vert _{\partial \Gamma _{j^{\ast }}}=\tilde{g}%
_{j^{\ast }}\text{, \ and \ }\left. \psi _{l^{\ast }}\right\vert _{\partial
\Gamma _{l^{\ast }}}=\tilde{g}_{l^{\ast }}.  \label{g}
\end{equation}%
In turn, by the Sobolev Embedding Theorem, if we define, on $\overline{%
\Gamma }_{s}$ the function 
\begin{equation}
\Upsilon (x)\equiv \left\{ 
\begin{array}{l}
\psi _{j^{\ast }}(x)\text{, \ for }x\in \overline{\Gamma} _{j^{\ast }} \\ 
\psi _{l^{\ast }}(x)\text{, \ for }x\in \overline{\Gamma }_{l^{\ast }} \\ 
0\text{, \ for }x\in \overline{\Gamma }_{s}\backslash \left( \overline{\Gamma}
_{j^{\ast }}\cup \overline{\Gamma} _{l^{\ast }}\right) ,%
\end{array}%
\right.  \label{sig}
\end{equation}%
then $\Upsilon (x)\in C(\overline{\Gamma }_{s})$. Since also $\psi _{j^{\ast
}}\in H^{1}(\Gamma _{j^{\ast }})$ and $\psi _{l^{\ast }}\in H^{1}(\Gamma
_{l^{\ast }})$, we eventually deduce via an integration by parts that $%
\Upsilon \in H^{1}(\Gamma _{s})$. (See e.g., the proof of Theorem 2, p. 36
of \cite{ciarlet}.) With this $H^{1}$-function in hand, and with aforesaid
continuous right inverses $\gamma _{s}^{+}\in \mathcal{L}(H^{\frac{1}{2}%
}(\Gamma _{s}),H^{1}(\Omega _{s}))$ and $\gamma _{f}^{+}\in \mathcal{L}(H^{%
\frac{1}{2}}(\Gamma _{s}),H_{\Gamma _{f}}^{1}(\Omega _{f}))$, we specify the
vector%
\begin{equation}
\left[ \varphi ,\mathbf{\psi },\xi \right] \equiv\left[ \gamma _{f}^{+}(\Upsilon
),0,...,\psi _{j^{\ast }},0,...0,\psi _{l^{\ast }},...,0,\gamma
_{s}^{+}(\Upsilon )\right] \in \mathbf{W}\text{,}  \label{vec}
\end{equation}%
where again, space $\mathbf{W}$ is given in (\ref{W}). With this vector in
hand, we consider the thin wave equation in (\ref{17}): With respect to the
two fixed indices $1\leq j^{\ast },l^{\ast }\leq K$, we have via (\ref{17})
\begin{equation*}
\begin{array}{l}
\lambda \left( h_{1j^{\ast }},\psi _{j^{\ast }}\right) _{\Gamma _{j^{\ast
}}}-\left( \Delta h_{0j^{\ast }},\psi _{j^{\ast }}\right) _{\Gamma _{j^{\ast
}}}+\left( h_{0j^{\ast }},\psi _{j^{\ast }}\right) _{\Gamma _{j^{\ast
}}} \\ -\left( \frac{\partial w_{0}}{\partial \nu }-\frac{\partial u_{0}}{%
\partial \nu },\psi _{j^{\ast }}\right) _{\Gamma _{j^{\ast }}} +\lambda \left( h_{1l^{\ast }},\psi _{l^{\ast }}\right) _{\Gamma _{l^{\ast
}}}-\left( \Delta h_{0l^{\ast }},\psi _{l^{\ast }}\right) _{\Gamma _{l^{\ast
}}} \\ +\left( h_{0l^{\ast }},\psi _{l^{\ast }}\right) _{\Gamma _{l^{\ast
}}}-\left( \frac{\partial w_{0}}{\partial \nu }-\frac{\partial u_{0}}{%
\partial \nu },\psi _{l^{\ast }}\right) _{\Gamma _{l^{\ast }}}=\left(
h_{1j^{\ast }}^{\ast },\psi _{j^{\ast }}\right) _{\Gamma _{j^{\ast
}}}+\left( h_{1l^{\ast }}^{\ast },\psi _{l^{\ast }}\right) _{\Gamma
_{l^{\ast }}}.%
\end{array}%
\end{equation*}%
A subsequent integration by parts, with (\ref{vec}) in mind, subsequently
yields%
\begin{equation*}
\begin{array}{l}
\lambda \left( h_{1j^{\ast }},\psi _{j^{\ast }}\right) _{\Gamma _{j^{\ast
}}}+\left( \nabla h_{0j^{\ast }},\nabla \psi _{j^{\ast }}\right) _{\Gamma
_{j^{\ast }}}-\left\langle \frac{\partial h_{0j^{\ast }}}{\partial
n_{j^{\ast }}},g\right\rangle _{\partial \Gamma _{j^{\ast }}\cap \partial
\Gamma _{l^{\ast }}} +\left( h_{0j^{\ast }}, \psi _{j^{\ast }}\right) _{\Gamma
_{j^{\ast }}}\\ 
\text{ \ }+\lambda \left( h_{1l^{\ast }},\psi _{l^{\ast }}\right) _{\Gamma
_{l^{\ast }}}+\left( \nabla h_{0l^{\ast }},\nabla \psi _{l^{\ast }}\right)
_{\Gamma _{l^{\ast }}}-\left\langle \frac{\partial h_{0l^{\ast }}}{\partial
n_{l^{\ast }}},g\right\rangle _{\partial \Gamma _{j^{\ast }}\cap \partial
\Gamma _{l^{\ast }}}+\left( h_{0l^{\ast }},\psi _{l^{\ast }}\right)
_{\Gamma _{l^{\ast }}} \\ 
\text{ \ }+\left( \nabla w_{0},\nabla \xi \right) _{\Omega _{s}}+\left(
\Delta w_{0},\xi \right) _{\Omega _{s}}+\left( \nabla u_{0},\nabla \varphi
\right) _{\Omega _{f}}+\left( \Delta u_{0},\varphi \right) _{\Omega
_{f}}=\left( h_{1j^{\ast }}^{\ast },\psi _{j^{\ast }}\right) _{\Gamma
_{j^{\ast }}}+\left( h_{1l^{\ast }}^{\ast },\psi _{l^{\ast }}\right)
_{\Gamma _{l^{\ast }}}.%
\end{array}%
\end{equation*}%
Invoking (\ref{11}) and (\ref{13}), we then have%
\begin{equation*}
\begin{array}{l}
-\left\langle \frac{\partial h_{0j^{\ast }}}{\partial n_{j^{\ast }}}%
,g\right\rangle _{\partial \Gamma _{j^{\ast }}\cap \partial \Gamma _{l^{\ast
}}}-\left\langle \frac{\partial h_{0l^{\ast }}}{\partial n_{l^{\ast }}}%
,g\right\rangle _{\partial \Gamma _{j^{\ast }}\cap \partial \Gamma _{l^{\ast
}}} +\left( h_{0j^{\ast }}, \psi _{j^{\ast }}\right) _{\Gamma
_{j^{\ast }}}+\left( h_{0l^{\ast }},\psi _{l^{\ast }}\right)
_{\Gamma _{l^{\ast }}}\\ 
+\lambda \left( h_{1j^{\ast }},\psi _{j^{\ast }}\right) _{\Gamma _{j^{\ast
}}}+\left( \nabla h_{0j^{\ast }},\nabla \psi _{j^{\ast }}\right) _{\Gamma
_{j^{\ast }}}+\lambda \left( h_{1l^{\ast }},\psi _{l^{\ast }}\right)
_{\Gamma _{l^{\ast }}}+\left( \nabla h_{0l^{\ast }},\nabla \psi _{l^{\ast
}}\right) _{\Gamma _{l^{\ast }}} \\ 
+\left( \nabla w_{0},\nabla \xi \right) _{\Omega _{s}}+\lambda \left(
w_{1},\xi \right) _{\Omega _{s}}-\left( w_{1}^{\ast },\xi \right) _{\Omega
_{s}}+\left( \nabla u_{0},\nabla \varphi \right) _{\Omega _{f}}+\lambda
\left( u_{0},\varphi \right) _{\Omega _{f}}-\left( u_{0}^{\ast },\varphi
\right) _{\Omega _{f}} \\ 
=\left( h_{1j^{\ast }}^{\ast },\psi _{j^{\ast }}\right) _{\Gamma _{j^{\ast
}}}+\left( h_{1l^{\ast }}^{\ast },\psi _{l^{\ast }}\right) _{\Gamma
_{l^{\ast }}}.%
\end{array}%
\end{equation*}%
Invoking the relations in (\ref{10}) and the variational equation (\ref{8}%
), which is satisfied by $\left[ u_{0},h_{11},\ldots ,h_{1K},w_{1}\right] $\\
(where again vector $\left[ \varphi ,\mathbf{\psi },\xi \right] $ is given
by (\ref{vec})), we have the relation%
\begin{equation*}
\left\langle \frac{\partial h_{0j^{\ast }}}{\partial n_{j^{\ast }}}%
,g\right\rangle _{\partial \Gamma _{j^{\ast }}\cap \partial \Gamma _{l^{\ast
}}}=-\left\langle \frac{\partial h_{0l^{\ast }}}{\partial n_{l^{\ast }}}%
,g\right\rangle _{\partial \Gamma _{j^{\ast }}\cap \partial \Gamma _{l^{\ast
}}}\text{, \ for all }g\in H_{0}^{\frac{1}{2}+\epsilon }(\partial \Gamma
_{j^{\ast }}\cap \partial \Gamma _{l^{\ast }})\text{. }
\end{equation*}%
Since $H_{0}^{\frac{1}{2}+\epsilon }(\partial \Gamma _{j^{\ast }}\cap
\partial \Gamma _{l^{\ast }})$ is dense in $H^{\frac{1}{2}}(\partial \Gamma _{j^{\ast }}\cap \partial \Gamma _{l^{\ast }})$, we deduce
now that 
\begin{equation}
\frac{\partial h_{0j^{\ast }}}{\partial n_{j^{\ast }}}=-\frac{\partial
h_{0l^{\ast }}}{\partial n_{l^{\ast }}}\text{, \ for }\partial \Gamma
_{j^{\ast }}\cap \partial \Gamma _{l^{\ast }}\neq \emptyset .  \label{last}
\end{equation}
Collecting (\ref{9})-(\ref{14}) and (\ref{17}), (\ref{19}) and (\ref{last}),
we have that the obtained variable%
\begin{equation*}
\left[ u_{0},h_{01},h_{11},\ldots ,h_{0K},h_{1K},w_{0},w_{1}\right] \in D(%
\mathbf{A}),
\end{equation*}%
and solves the resolvent equation (\ref{a1}). This concludes the proof of
Theorem \ref{well}, upon application of the Lumer-Phillips Theorem.

\section{Strong Stability-Proof of Theorem \ref{SS} }
In this section, our main aim is to address the issue of asymptotic behavior of the solution that we stated in Section 2. In this regard, we show that the system given in (\ref{2a})-(\ref{IC}) is strongly stable. Our proof will be independent of the compactness or noncompactness of the resolvent of $\mathbf{A}$ (see Remark \ref {remark_1}.)  It will hinge on an ultimate appeal to the following well known result :

\begin{theorem} 
\label{AB} (\cite{A-B}) Let ${T(t)}_{t\geq 0}$ be a bounded $C_{0}$-semigroup on a reflexive Banach space X, with generator $\mathbf{A}$. Assume that $\sigma_p(\mathbf{A})\cap i\mathbb{R}=\emptyset$, where $\sigma_p(\mathbf{A})$ is the point spectrum of $\mathbf{A}$. If $\sigma(\mathbf{A})\cap i\mathbb{R}$ is countable then ${T(t)}_{t\geq 0}$ is strongly stable.
\end{theorem}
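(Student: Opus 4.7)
The plan is to invoke the Arendt--Batty criterion (Theorem \ref{AB}): since $\{e^{\mathbf{A}t}\}_{t\geq 0}$ is a contraction $C_0$-semigroup on the reflexive Hilbert space $\mathbf{H}$ by Theorem \ref{well}, strong stability follows once I verify $\sigma(\mathbf{A})\cap i\mathbb{R}=\emptyset$. I will split this spectral claim into two pieces: (i) $\sigma_p(\mathbf{A})\cap i\mathbb{R}=\emptyset$, and (ii) the approximate point spectrum of $\mathbf{A}$ avoids $i\mathbb{R}$. These together suffice, since the topological boundary of $\sigma(\mathbf{A})$ is contained in the approximate point spectrum for any closed densely defined operator.

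For (i), suppose $\mathbf{A}\Phi=i\beta\Phi$ with $\Phi\in D(\mathbf{A})\setminus\{0\}$. Taking the real part of $(\mathbf{A}\Phi,\Phi)_{\mathbf{H}}$ via the dissipativity identity \eqref{dissi} gives $\|\nabla u_0\|_{\Omega_f}=0$; combined with $u_0|_{\Gamma_f}=0$, Poincar\'e forces $u_0\equiv 0$, hence $\partial u_0/\partial\nu|_{\Gamma_s}=0$, and (A.iii) yields $h_{1j}=0$, $w_1|_{\Gamma_j}=0$. For $\beta\neq 0$, the first-order resolvent relations $i\beta h_{0j}=h_{1j}$ and $i\beta w_0=w_1$ give $h_{0j}=0$ and $w_0|_{\Gamma_s}=0$; inserting into the thin-wave equation produces $\partial w_0/\partial\nu=0$ on every $\Gamma_j$, so $w_0$ solves $-\Delta w_0=\beta^2 w_0$ in $\Omega_s$ with vanishing Cauchy data on $\Gamma_s$, and Holmgren's unique-continuation theorem (applicable since each flat face is analytic and the coefficients are constant) forces $w_0\equiv 0$. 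For $\beta=0$, testing the reduced equation $-\Delta h_{0j}+h_{0j}=\partial w_0/\partial\nu$ against $\bar h_{0j}$, summing over $j$, cancelling edge contributions via (A.iv) exactly as in \eqref{10.5}, and invoking Green's formula for the harmonic $w_0$ with trace $w_0|_{\Gamma_j}=h_{0j}$, I obtain the identity
\[
\sum_{j=1}^{K}\bigl(\|\nabla h_{0j}\|_{\Gamma_j}^2 + \|h_{0j}\|_{\Gamma_j}^2\bigr) + \|\nabla w_0\|_{\Omega_s}^2 = 0,
\]
which forces all remaining components to vanish.

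For (ii), I argue by contradiction: pick a sequence $\Phi_n\in D(\mathbf{A})$ with $\|\Phi_n\|_{\mathbf{H}}=1$ and $F_n:=(i\beta I-\mathbf{A})\Phi_n\to 0$ in $\mathbf{H}$. Taking the real part of $(F_n,\Phi_n)_{\mathbf{H}}$ and applying \eqref{dissi} forces $\|\nabla u_0^n\|_{\Omega_f}\to 0$, hence $u_0^n\to 0$ in $H^1(\Omega_f)$; by trace, $h_{1j}^n\to 0$ and $w_1^n|_{\Gamma_j}\to 0$ in $H^{1/2}(\Gamma_j)$. The remaining task is to propagate this smallness to $w_0^n,w_1^n$ in $\Omega_s$ and to $h_{0j}^n$ on $\Gamma_j$, producing the desired contradiction with $\|\Phi_n\|_{\mathbf{H}}=1$.

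This last step is the main obstacle and is precisely where the static wave multipliers signaled in the Introduction must enter. Because the thin-wave equation carries the \emph{difference} $\partial w_0^n/\partial\nu - \partial u_0^n/\partial\nu$ as a source term, smallness of $\partial u_0^n/\partial\nu$ alone does not give control of $\partial w_0^n/\partial\nu|_{\Gamma_s}$ (in contrast to the pure heat--3D-wave system of \cite{Duyckaerts}). I plan to apply a Rellich-type multiplier $h\cdot\nabla w_0^n$, with a smooth vector field $h$ satisfying $h|_{\Gamma_s}=\nu$, to the static wave resolvent relation $-\Delta w_0^n=-i\beta w_1^n + f^n_{w_1}$, producing a boundary bound of the form
\[
\int_{\Gamma_s}\bigl|\partial w_0^n/\partial\nu\bigr|^2\,dS \;\leq\; C\bigl(\|w_0^n\|_{H^1(\Omega_s)}^2 + \|w_1^n\|_{L^2(\Omega_s)}^2\bigr) + o(1),
\]
in the spirit of the wave identities of \cite{trigg, AG1}. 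Coupling this boundary control with test-function identities against $\bar h_{1j}^n$ and $\bar w_1^n$ in the thin- and thick-wave resolvent equations (whose edge terms at $\partial\Gamma_j\cap\partial\Gamma_l$ cancel by (A.iv) just as in the wellposedness proof), together with a Holmgren-type fallback to dispose of low-frequency residuals, one concludes $\|\Phi_n\|_{\mathbf{H}}\to 0$, the required contradiction. With (i) and (ii) in hand, $\sigma(\mathbf{A})\cap i\mathbb{R}=\emptyset$, and Theorem \ref{AB} delivers the claimed strong stability.
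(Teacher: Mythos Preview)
The statement you were asked to prove is Theorem~\ref{AB}, the Arendt--Batty criterion, which the paper does \emph{not} prove: it is quoted from \cite{A-B} and used as a black box. Your proposal is not a proof of Theorem~\ref{AB} at all; it is an attempted proof of Theorem~\ref{SS} (strong stability of the specific heat--wave--wave generator) \emph{via} Theorem~\ref{AB}. So as a proof of the stated theorem, the proposal is off-target.

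That said, treating your write-up as a proof of Theorem~\ref{SS} and comparing it to the paper's argument (Proposition~\ref{invert}, Lemma~\ref{point-cont-spec}, Corollary~\ref{RS}): your global logical structure is sound and in one respect cleaner than the paper's. You correctly observe that for a contraction semigroup any $i\beta\in\sigma(\mathbf{A})$ lies on $\partial\sigma(\mathbf{A})$ and hence in the approximate point spectrum, so ruling out approximate spectrum on $i\mathbb{R}$ is enough; the paper instead handles point, continuous and residual spectrum separately (the last via the explicit adjoint in Proposition~\ref{adj}).

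However, your multiplier step has real gaps that the paper's proof explicitly addresses. First, you apply the Rellich multiplier directly to $w_0^n$, but $D(\mathbf{A})$ only gives $\Delta w_0^n\in L^2(\Omega_s)$, not $w_0^n\in H^2(\Omega_s)$, so the boundary identity is not justified. The paper fixes this by subtracting the harmonic Dirichlet extension, setting $z_n=w_{0n}+\tfrac{i}{\beta}D_s[u_n|_{\Gamma_s}+w_{0n}^{\ast}|_{\Gamma_s}]$ so that $z_n\in H_0^1(\Omega_s)$ with $\Delta z_n\in L^2$, and then invokes convexity of $\Omega_s$ (Grisvard) to get $z_n\in H^2$. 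Second, your choice $h|_{\Gamma_s}=\nu$ is not available on a polyhedral boundary (the normal is discontinuous); the paper uses the smooth field $m$ of Grisvard's Lemma~1.5.1.9 with $-m\cdot\nu\ge\delta>0$. Third, and most importantly, the multiplier identity only yields a \emph{uniform bound} $\|\partial z_n/\partial\nu\|_{L^2(\Gamma_s)}\le C$, not smallness. The paper closes this gap with a genuine extra idea absent from your sketch: the thin-wave equation gives $\partial w_{0n}/\partial\nu\to 0$ in $H^{-1}(\Gamma_j)$ once $h_{jn}\to 0$ in $H^1(\Gamma_j)$, and then interpolation between this $H^{-1}$ smallness and the $L^2$ bound from the multiplier yields $\partial z_n/\partial\nu\to 0$ in $H^{-1/2}(\Gamma_s)$; only then does the weak-limit/overdetermined-eigenvalue argument kick in. Your ``Holmgren-type fallback'' and ``test-function identities'' do not supply this interpolation mechanism.
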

The proof of this theorem entails the elimination of all three parts of the spectrum of the generator $\mathbf{A}$ from the imaginary axis. For this, we will give the necessary analysis on the spectrum in the following subsection.

\subsection{Spectral Analysis on the generator $\mathbf{A}$}
Since we wish to satisfy the conditions of Theorem \ref{AB}, we will prove that $\sigma(\mathbf{A})\cap i\mathbb{R}=\emptyset$ which is equivalent to show that $$ i\mathbb{R}\subset \rho (\mathbf{A}).$$ To do this, we start with the following Proposition:
\begin{proposition}
\label{invert} With generator $\mathbf{A}:D(\mathbf{A})\subset \mathbf{H}%
\rightarrow \mathbf{H}$ given in (\ref{4a})-(\ref{dom}), the point $0\in
\rho (\mathbf{A).}$ That is, $\mathbf{A}$ is boundedly invertible.
\end{proposition}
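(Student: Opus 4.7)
I would establish bijectivity of $\mathbf{A}: D(\mathbf{A}) \to \mathbf{H}$ directly; boundedness of $\mathbf{A}^{-1}$ then follows from the Closed Graph Theorem since $\mathbf{A}$ is closed as the generator of a $C_0$-semigroup (Theorem \ref{well}). Given arbitrary $\Phi^* = [u_0^*, h_{01}^*, h_{11}^*, \ldots, h_{0K}^*, h_{1K}^*, w_0^*, w_1^*] \in \mathbf{H}$, the algebraic rows of $\mathbf{A}\Phi = \Phi^*$ force $h_{1j} := h_{0j}^*$ and $w_1 := w_0^*$, while (A.iii) dictates $u_0|_{\Gamma_j} = h_{0j}^*$. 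I would then recover $u_0$ as the unique $H^1(\Omega_f)$-solution of the mixed Dirichlet BVP $\Delta u_0 = u_0^*$ in $\Omega_f$, $u_0|_{\Gamma_f} = 0$, $u_0|_{\Gamma_j} = h_{0j}^*$. Well-posedness is guaranteed by the $\mathbf{H}$-compatibility of the $h_{0j}^*$'s on the edges (which patches them into an $H^1(\Gamma_s)$ function by a Ciarlet-type argument, as invoked around (\ref{sig}) above) combined with $\overline{\Gamma}_s \cap \overline{\Gamma}_f = \emptyset$; in particular $\partial u_0/\partial \nu \in H^{-1/2}(\Gamma_s)$.

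The core step is to produce $(h_{01}, \ldots, h_{0K}, w_0)$ satisfying $\Delta w_0 = w_1^*$ in $\Omega_s$, $w_0|_{\Gamma_j} = h_{0j}$, and $(\Delta - I)h_{0j} + \partial w_0/\partial \nu = h_{1j}^* + \partial u_0/\partial \nu$ on $\Gamma_j$, subject to (A.iv). I would embed the kinematic constraint $w_0|_{\Gamma_j} = h_{0j}$ directly into the trial/test space
\[
\mathbf{W}_0 = \{[\psi_1, \ldots, \psi_K, \xi] \in \mathcal{V} \times H^1(\Omega_s) : \xi|_{\Gamma_j} = \psi_j,\ 1 \leq j \leq K\},
\]
and apply Lax-Milgram to the bilinear form
\[
a((h_0, w_0), (\psi, \xi)) = \sum_{j=1}^K \bigl[(\nabla h_{0j}, \nabla \psi_j)_{\Gamma_j} + (h_{0j}, \psi_j)_{\Gamma_j}\bigr] + (\nabla w_0, \nabla \xi)_{\Omega_s},
\]
with a right-hand side encoding the data and the precomputed $\partial u_0/\partial \nu$; the pairing $\sum_j \langle \partial u_0/\partial \nu, \psi_j\rangle_{\Gamma_j}$ is reassembled as $\langle \partial u_0/\partial \nu, \xi|_{\Gamma_s}\rangle_{\Gamma_s}$ in $H^{-1/2} \times H^{1/2}$ duality via $\xi|_{\Gamma_j} = \psi_j$. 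Continuity is routine. The \emph{main obstacle} is coercivity of $a$: unlike the $\lambda > 0$ setting of Theorem \ref{well}, the $\lambda \|w_0\|_{L^2(\Omega_s)}^2$ contribution has vanished, so I must reclaim control of $\|w_0\|_{L^2(\Omega_s)}$. I would do so via the trace-enhanced Poincar\'e inequality $\|w_0\|_{L^2(\Omega_s)}^2 \leq C(\|\nabla w_0\|_{L^2(\Omega_s)}^2 + \|w_0\|_{L^2(\Gamma_s)}^2)$, the boundary term equalling $\sum_j \|h_{0j}\|_{L^2(\Gamma_j)}^2$ by the built-in coupling in $\mathbf{W}_0$, so that both contributions are controlled by $a((h_0, w_0), (h_0, w_0))$. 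Once $(h_0, w_0)$ is produced, the domain membership $\Phi \in D(\mathbf{A})$ and the identity $\mathbf{A}\Phi = \Phi^*$ would be verified by rerunning the test-function manipulations from the proof of Theorem \ref{well}: localized tests in $\mathcal{D}(\Omega_f)$, $\mathcal{D}(\Gamma_j)$, $\mathcal{D}(\Omega_s)$ to extract the PDEs and the regularities (\ref{12}), (\ref{14}), (\ref{19}), and the extension construction (\ref{g})--(\ref{vec}) to produce the interface flux cancellation (A.iv.b).

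For injectivity, suppose $\mathbf{A}\Phi = 0$. The dissipativity identity (\ref{dissi}) gives $\mathrm{Re}(\mathbf{A}\Phi, \Phi)_{\mathbf{H}} = -\|\nabla u_0\|_{\Omega_f}^2 = 0$, so $u_0 \equiv 0$ (using $u_0|_{\Gamma_f} = 0$); then $h_{1j} = 0$, $w_1|_{\Gamma_j} = 0$, and $\partial u_0/\partial \nu = 0$ by (A.iii). The residual system reads $(\Delta - I)h_{0j} + \partial w_0/\partial \nu = 0$ on $\Gamma_j$, with $\Delta w_0 = 0$ in $\Omega_s$ and $w_0|_{\Gamma_j} = h_{0j}$. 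Pairing the $h$-equation with $\bar{h}_{0j}$, summing over $j$, invoking the edge cancellation (\ref{10.5}), and then Green's identity on $\Omega_s$ --- where the crucial sign $\langle \partial w_0/\partial \nu, w_0\rangle_{\Gamma_s} = -\|\nabla w_0\|_{\Omega_s}^2$ stems from $\nu$ pointing \emph{interior} to $\Omega_s$ --- one arrives at the clean identity $\sum_{j=1}^K \bigl[\|\nabla h_{0j}\|_{\Gamma_j}^2 + \|h_{0j}\|_{\Gamma_j}^2\bigr] + \|\nabla w_0\|_{\Omega_s}^2 = 0$, forcing $h_{0j} \equiv 0$ and $w_0 \equiv 0$, which completes the proof.
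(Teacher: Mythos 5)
Your proposal is correct and follows essentially the same route as the paper: the same algebraic reduction ($h_{1j}=h_{0j}^*$, $w_1=w_0^*$), the same mixed Dirichlet problem for $u_0$, Lax--Milgram on the same coupled space (your $\mathbf{W}_0$ is the paper's $\chi$ in (\ref{50})) with the same bilinear form (\ref{52}), and the same test-function machinery to verify membership in $D(\mathbf{A})$. If anything, you are more explicit in two places where the paper is terse: you supply the trace-enhanced Poincar\'e argument needed to justify the $\chi$-ellipticity that the paper merely asserts, and you give a genuine energy-identity proof of injectivity rather than the paper's brief appeal to dissipativity.
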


\begin{proof}
Given $\Phi ^{\ast }=\left[ u_{0}^{\ast },h_{01}^{\ast },h_{11}^{\ast
},\ldots ,h_{0K}^{\ast },h_{1K}^{\ast },w_{0}^{\ast },w_{1}^{\ast }\right]
\in \mathbf{H,}$ we take up the task of finding $\Phi =\left[
u_{0},h_{01},h_{11},\ldots ,h_{0K},h_{1K},w_{0},w_{1}\right] \in D(\mathbf{A}%
)$ which solves%
\begin{equation}
\mathbf{A}\Phi =\Phi ^{\ast },  \label{43}
\end{equation}%
or%
\begin{equation}
\left[ 
\begin{array}{c}
\Delta u_{0} \\ 
h_{11} \\ 
-\frac{\partial u_{0}}{\partial \nu }|_{\Gamma _{1}}+(\Delta -I)h_{01}+\frac{%
\partial w_{0}}{\partial \nu }|_{\Gamma _{1}} \\ 
\vdots  \\ 
h_{1K} \\ 
-\frac{\partial u_{0}}{\partial \nu }|_{\Gamma _{K}}+(\Delta -I)h_{0K}+\frac{%
\partial w_{0}}{\partial \nu }|_{\Gamma _{K}} \\ 
w_{1} \\ 
\Delta w_{0}%
\end{array}%
\right] =\left[ 
\begin{array}{c}
u_{0}^{\ast } \\ 
h_{01}^{\ast } \\ 
h_{11}^{\ast } \\ 
\vdots  \\ 
h_{0K}^{\ast } \\ 
h_{1K}^{\ast } \\ 
w_{0}^{\ast } \\ 
w_{1}^{\ast }%
\end{array}%
\right] .  \label{44}
\end{equation}%
From the thin and thick wave component of this equation we see that%
\begin{equation}
w_{1}=w_{0}^{\ast }\in H^{1}(\Omega_s )  \label{45}
\end{equation}%
\begin{equation}
h_{1j}=h_{0j}^{\ast }\in H^{1}(\Gamma _{j}),\text{ \ \ \ for \  }1\leq j\leq
K \label{46}
\end{equation}%
Moreover, from the heat and thick wave components of (\ref{44}), and the
domain criterion (A.iii), we have that the solution component $u_{0}$ should
satisfy the following BVP:%
\begin{equation}
\left\{ 
\begin{array}{c}
\Delta u_{0}=u_{0}^{\ast }\text{ \ \ \ \ in \ }\Omega _{f} \\ 
u_{0}|_{\Gamma _{f}}=0 \\ 
u_{0}|_{\Gamma s}=w_{0}^{\ast }|_{\Gamma _{s}}%
\end{array}%
\right.   \label{47}
\end{equation}%
Solving this BVP, and estimating its solution, in part by the Sobolev Trace
Theorem, we have%
\begin{equation}
\left\Vert u_{0}\right\Vert _{H_{\Gamma _{f}}^{1}(\Omega _{f})}+\left\Vert
\Delta u_{0}\right\Vert _{\Omega _{f}}\leq C\left[ \left\Vert u_{0}^{\ast
}\right\Vert _{\Omega _{f}}+\left\Vert w_{0}^{\ast }\right\Vert
_{H^{1}(\Omega _{s})}\right] .  \label{48}
\end{equation}%
In turn, the use of this estimate in an integration by parts gives%
\begin{equation}
\left\Vert \frac{\partial u_{0}}{\partial \nu }\right\Vert _{H^{-\frac{1}{2}%
}(\partial \Omega _{f})}\leq C\left[ \left\Vert u_{0}^{\ast }\right\Vert
_{\Omega _{f}}+\left\Vert w_{0}^{\ast }\right\Vert _{H^{1}(\Omega _{s})}%
\right].   \label{49}
\end{equation}%
In addition, with the space $\mathcal{V}$ as in (\ref{V}), we set%
\begin{equation}
\chi \equiv \left\{ \left[ \psi ,\xi \right] \in \mathcal{V}\times
H^{1}(\Omega _{s}):\psi _{j}=\xi |_{\Gamma _{j}}\text{ \ \ for \ }1\leq
j\leq K\text{\ }\right\} .  \label{50}
\end{equation}%
With this space in hand, and with the thin-wave and thick-wave components of
equation (\ref{44}) in mind, we consider the variational relation%
\begin{eqnarray}
&&(\nabla w_{0},\nabla \xi )_{\Omega _{s}}  \notag \\
&&+\sum\limits_{j=1}^{K}\left[ (\nabla h_{0j},\nabla \psi _{j})_{\Gamma
_{j}}+(h_{0j},\psi _{j})_{\Gamma _{j}}\right]   \notag \\
&=&-(w_{1}^{\ast },\xi )_{\Omega _{s}}  \notag \\
&&-\sum\limits_{j=1}^{K}\left[ (h_{1j}^{\ast },\psi _{j})_{\Gamma _{j}}+(%
\frac{\partial u_{0}}{\partial \nu },\psi _{j})_{\Gamma _{j}}\right], 
\label{51}
\end{eqnarray}%
for every $\left[ \psi ,\xi \right] \in \chi $ where the term $\frac{\partial u_{0}}{\partial \nu }|_{\Gamma _{s}}$ is from (\ref{49}). Since the bilinear form $%
b(\cdot ,\cdot ):\chi \rightarrow 
%TCIMACRO{\U{211d} }%
%BeginExpansion
\mathbb{R}
%EndExpansion
,$ given by%
\begin{equation}
b(\left[ \psi ,\xi \right] ,\left[ \widetilde{\psi },\widetilde{\xi }\right]
)=(\nabla \xi ,\nabla \widetilde{\xi })_{\Omega _{s}}+\sum\limits_{j=1}^{K}%
\left[ (\nabla \psi _{j},\nabla \widetilde{\psi }_{j})_{\Gamma _{j}}+(\psi
_{j},\widetilde{\psi }_{j})_{\Gamma _{j}}\right]   \label{52}
\end{equation}%
for every $\left[ \psi ,\xi \right] ,\left[ \widetilde{\psi },\widetilde{\xi 
}\right] \in \chi ,$ is continuous and $\chi $-elliptic, then by
Lax-Milgram, there exists a unique solution 
\begin{equation}
\phi =\left[ (h_{01},h_{02},\ldots ,h_{0K}),w_{0}\right] \in \chi 
\label{53}
\end{equation}%
to the variational relation (\ref{51}). To show that the obtained $\left[
u_{0},[h_{01},h_{11},\ldots ,h_{0K},h_{1K}],w_{0},w_{1}\right] \in \mathbf{H}
$ is in $D(\mathbf{A})$ and satisfies the equation (\ref{44}):\newline
Proceeding very much as we did in the proof of Theorem \ref{well}, we take
in (\ref{51})%
\begin{equation*}
\left[ \psi ,\xi \right] =\left[ \left[ 0,0,...,0\right] ,\varphi \right] ,
\end{equation*}%
where $\varphi \in \mathcal{D}(\Omega _{s}).$ This gives%
\begin{equation*}
(\nabla w_{0},\nabla \xi )_{\Omega _{s}}=-(w_{1}^{\ast },\xi )_{\Omega _{s}},
\end{equation*}%
whence we obtain%
\begin{equation}
-\Delta w_{0}=-w_{1}^{\ast }\text{ \ \ \ in \ \ }\Omega _{s},  \label{54}
\end{equation}%
with%
\begin{eqnarray}
\left\Vert \Delta w_{0}\right\Vert _{\Omega _{s}}+\left\Vert \frac{\partial
w_{0}}{\partial \nu }\right\Vert _{H^{-\frac{1}{2}}(\Gamma _{s})} &\leq &C%
\left[ \left\Vert w_{1}^{\ast }\right\Vert _{\Omega _{s}}+\left\Vert
w_{0}\right\Vert _{H^{1}(\Omega _{s})}\right]   \notag \\
&\leq &C\left\Vert \left[ u_{0}^{\ast },[h_{01}^{\ast },h_{11}^{\ast
},\ldots ,h_{0K}^{\ast },h_{1K}^{\ast }],w_{0}^{\ast },w_{1}^{\ast }\right]
\right\Vert _{\mathbf{H}},  \label{55}
\end{eqnarray}%
after using (\ref{53}). In turn, using aforesaid right continuous inverse $%
\gamma _{s}^{+}\in \mathcal{L}(H^{\frac{1}{2}}(\Gamma _{s}),H^{1}(\Omega
_{s})),$ let in (\ref{51}), test function%
\begin{equation*}
\left[ \psi ,\xi \right] =\left[ \left[ (\psi _{1})_{ext},...,(\psi
_{K})_{ext}\right] ,\gamma _{s}^{+}\left( \sum\limits_{j=1}^{K}\left( \psi
_{j}\right) _{ext}\right) \right] \in \chi ,
\end{equation*}%
where each $\psi _{j}\in H_{0}^{1}(\Gamma _{j})$ $(1\leq j\leq K),$ and each 
$\left( \psi _{j}\right) _{ext}$ is as in (\ref{15}). Applying this function
to (\ref{51}), integrating by parts and invoking (\ref{54}), we have%
\begin{eqnarray*}
&&-(\Delta w_{0},\xi )_{\Omega _{s}}-\left\langle \frac{\partial w_{0}}{%
\partial \nu },\xi |_{\Gamma _{s}}\right\rangle _{\Gamma _{s}} \\
&&+\sum\limits_{j=1}^{K}\left[ (\nabla h_{0j},\nabla \psi _{j})_{\Gamma
_{j}}+(h_{0j},\psi _{j})_{\Gamma _{j}}\right]  \\
&=&-\sum\limits_{j=1}^{K}\left[ \left\langle \frac{\partial u_{0}}{\partial
\nu },\psi _{j}\right\rangle _{\Gamma _{j}}+(h_{1j}^{\ast },\psi
_{j})_{\Gamma _{j}}\right] -(w_{1}^{\ast },\xi )_{\Omega _{s}}.
\end{eqnarray*}%
Again, as each $\psi _{j}\in H_{0}^{1}(\Gamma _{j})$ is arbitrary, we deduce
that each $h_{0j}$ solves the thin-wave equation%
\begin{equation}
-\Delta h_{0j}+h_{0j}-\frac{\partial w_{0}}{\partial \nu }+\frac{\partial
u_{0}}{\partial \nu }=-h_{1j}^{\ast },\text{\  in \ \ } \Gamma _{j},\text{ \ } 1\leq j\leq K.  \label{56}
\end{equation}%
A subsequent integration by parts, and invocation of (\ref{49}), (\ref{53}) and (\ref{55}%
), give for $1\leq j\leq K,$%
\begin{equation*}
\left\Vert \Delta h_{0j}\right\Vert _{\Gamma _{j}}+\left\Vert \frac{\partial
h_{0j}}{\partial n_{j}}\right\Vert _{H^{-\frac{1}{2}}(\partial \Gamma _{j})}
\end{equation*}%
\begin{equation}
\leq C\left\Vert \left[ u_{0}^{\ast },[h_{01}^{\ast },h_{11}^{\ast },\ldots
,h_{0K}^{\ast },h_{1K}^{\ast }],w_{0}^{\ast },w_{1}^{\ast }\right]
\right\Vert _{\mathbf{H}}.  \label{57}
\end{equation}%
Now, proceeding as in the final stage of the proof of Theorem \ref{well}:
let fixed indices  $j^{\ast },l^{\ast }$, $1\leq j^{\ast },l^{\ast }\leq K$,
satisfy $\partial \Gamma _{j^{\ast }}\cap $ $\partial \Gamma _{l^{\ast
}}\neq \emptyset $. Given function $g$ $\in $ $H_{0}^{\frac{1}{2}+\epsilon
}(\partial \Gamma _{j^{\ast }}\cap $ $\partial \Gamma _{l^{\ast }}),$ we
invoke the associated functions $\psi _{j^{\ast }}\in H^{1+\epsilon }(\Gamma
_{j^{\ast }})$ and $\psi _{l^{\ast }}\in H^{1+\epsilon }(\Gamma _{l^{\ast }})
$ as in (\ref{g}), also $\Upsilon \in H^{1}(\Gamma _{s})$\ as in (\ref{sig}%
). With these functions, and said continuous right inverse $\gamma
_{s}^{+}\in \mathcal{L}(H^{\frac{1}{2}}(\Gamma _{s}),H^{1}(\Omega _{s}))$, we
consider test function%
\begin{equation*}
\left[ \psi ,\xi \right] =\left[ \left[ 0,...,\psi _{j^{\ast }},0,...0,\psi
_{l^{\ast }},...,0\right] ,\gamma _{s}^{+}\left( \Upsilon \right) \right]
\in \chi. 
\end{equation*}%
Applying this test function to the variational relation (\ref{51}), and
subsequently invoking (\ref{54}), we obtain%
\begin{eqnarray*}
&&-\left\langle \frac{\partial w_{0}}{\partial \nu },\xi |_{\Gamma
_{s}}\right\rangle _{\Gamma _{s}}+(\nabla h_{0j^{\ast }},\nabla \psi
_{j^{\ast }})_{\Gamma _{j}^{\ast }}+(h_{0j^{\ast }},\psi _{j^{\ast
}})_{\Gamma _{j^{\ast }}} \\
&&+(\nabla h_{0l^{\ast }},\nabla \psi _{l^{\ast }})_{\Gamma _{l}^{\ast
}}+(h_{0l^{\ast }},\psi _{l^{\ast }})_{\Gamma _{l^{\ast }}} \\
&=&-(h_{1j^{\ast }},\psi _{j^{\ast }})_{\Gamma _{j^{\ast }}}-\left\langle 
\frac{\partial u_{0}}{\partial \nu },\psi _{j^{\ast }}\right\rangle _{\Gamma
_{j}^{\ast }} \\
&&-(h_{1l}^{\ast },\psi _{l^{\ast }})_{\Gamma _{l}^{\ast }}-\left\langle 
\frac{\partial u_{0}}{\partial \nu },\psi _{l^{\ast }}\right\rangle _{\Gamma
_{l}^{\ast }}.
\end{eqnarray*}%
Integrating by parts with respect to the thin wave components, and invoking (%
\ref{56}) and (\ref{g}), we then have%
\begin{equation*}
\left\langle \frac{\partial h_{0j^{\ast }}}{\partial n_{j^{\ast }}}%
,g\right\rangle _{\partial \Gamma _{j^{\ast }}\cap \partial \Gamma _{l^{\ast
}}}+\left\langle \frac{\partial h_{0l^{\ast }}}{\partial n_{l^{\ast }}}%
,g\right\rangle _{\partial \Gamma _{j^{\ast }}\cap \partial \Gamma _{l^{\ast
}}}=0\text{.}
\end{equation*}%
Since $g\in H_{0}^{\frac{1}{2}+\epsilon }(\partial \Gamma _{j^{\ast }}\cap
\partial \Gamma _{l^{\ast }})$ is arbitrary, a density argument yields%
\begin{equation}
\left\langle \frac{\partial h_{0j^{\ast }}}{\partial n_{j^{\ast }}}%
,g\right\rangle _{\partial \Gamma _{j^{\ast }}\cap \partial \Gamma _{l^{\ast
}}}=-\left\langle \frac{\partial h_{0l^{\ast }}}{\partial n_{l^{\ast }}}%
,g\right\rangle _{\partial \Gamma _{j^{\ast }}\cap \partial \Gamma _{l^{\ast
}}}\text{, \ }\forall \text{ }j^{\ast },l^{\ast }\text{ \ \ },1\leq j^{\ast
},l^{\ast }\leq K  \label{58}
\end{equation}%
such that $\partial \Gamma _{j^{\ast }}\cap \partial \Gamma _{l^{\ast }}\neq
\emptyset .$ Collecting (\ref{45}), (\ref{46}), (\ref{48}), (\ref{49}), (\ref{53}), (\ref{54}), (\ref%
{56})-(\ref{58}), we have now that the obtained $\left[ u_{0},[h_{01},h_{11},\ldots
,h_{0K},h_{1K}],w_{0},w_{1}\right]\in D(\mathbf{A})$ satisfies
the equation (\ref{43}) for arbitrary $\Phi ^{\ast }\in \mathbf{H.}$ Since also $%
\mathbf{A}:D(\mathbf{A})\subset \mathbf{H}\rightarrow \mathbf{H}$ is
dissipative (and so injective), we conclude that $\mathbf{A}$ is boundedly invertible.
\end{proof}

In what follows, we will need the Hilbert space adjoint of $\mathbf{A}:D(%
\mathbf{A})\subset \mathbf{H}\rightarrow \mathbf{H}$ which can be readily
computed:

\begin{proposition}
\label{adj} The Hilbert space adjoint \ $\mathbf{A}^{\ast }:D(\mathbf{A}%
^{\ast })\subset \mathbf{H}\rightarrow \mathbf{H}$ of the thick wave-thin
wave-heat generator is given as,%
\begin{equation*}
\mathbf{A}^{\ast }=\left[ 
\begin{array}{cccccccc}
\Delta  & 0 & 0 & 0 & 0 & 0 & 0 & 0 \\ 
0 & 0 & -I & \cdots  & 0 & 0 & 0 & 0 \\ 
-\frac{\partial }{\partial \nu }|_{\Gamma _{1}} & (I-\Delta ) & 0 & \cdots 
& 0 & 0 & -\frac{\partial }{\partial \nu }|_{\Gamma _{1}} & 0 \\ 
\vdots  & \vdots  & \vdots  & \cdots  & \vdots  & \vdots  & \vdots  & \vdots 
\\ 
0 & 0 & 0 & \cdots  & 0 & -I & 0 & 0 \\ 
-\frac{\partial }{\partial \nu }|_{\Gamma _{K}} & 0 & 0 & \cdots  & (I-\Delta ) & 0 & -\frac{\partial }{\partial \nu }|_{\Gamma _{1}}& 0 \\ 
0 & 0 & 0 & \cdots  & 0 & 0 & 0 & -I \\ 
0 & 0 & 0 & \cdots  & 0 & 0 & -\Delta  & 0%
\end{array}%
\right] ;
\end{equation*}
\end{proposition}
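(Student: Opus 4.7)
My plan is to verify the claimed formula for $\mathbf{A}^{\ast}$ by computing $(\mathbf{A}\Phi,\widetilde{\Phi})_{\mathbf{H}}$ directly and using Green's identities to transfer the differential operators onto $\widetilde{\Phi}$. After cancellation of boundary contributions, the result should match $(\Phi,\mathbf{A}^{\ast}\widetilde{\Phi})_{\mathbf{H}}$ with $\mathbf{A}^{\ast}$ as prescribed. Specifically, I would take $\Phi\in D(\mathbf{A})$ and $\widetilde{\Phi}\in \widetilde{D}$ where $\widetilde{D}$ is a candidate adjoint domain satisfying the natural trace-matching and corner-compatibility analogues of (A.i)--(A.iv); for instance $\widetilde{u}_{0}|_{\Gamma_{j}}=\widetilde{h}_{1j}=\widetilde{w}_{1}|_{\Gamma_{j}}$, with matching of $\widetilde{h}_{1j}$ and sign-reversal of $\partial_{n_{j}}\widetilde{h}_{0j}$ along shared edges.

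The computational skeleton: apply the divergence theorem to $(\Delta u_{0},\widetilde{u}_{0})_{\Omega_{f}}$ twice so it becomes $(u_{0},\Delta\widetilde{u}_{0})_{\Omega_{f}}$ plus $\Gamma_{s}$-boundary pieces $\langle\partial_{\nu}u_{0},\widetilde{u}_{0}\rangle_{\Gamma_{s}}-\langle u_{0},\partial_{\nu}\widetilde{u}_{0}\rangle_{\Gamma_{s}}$; similarly for $(\Delta w_{0},\widetilde{w}_{1})_{\Omega_{s}}$ and $(\nabla w_{1},\nabla\widetilde{w}_{0})_{\Omega_{s}}$, noting the sign flip on $\Gamma_{s}$ because $\nu$ is interior to $\Omega_{s}$; and for the thin-wave pairings, integrate $(\Delta h_{0j},\widetilde{h}_{1j})_{\Gamma_{j}}$ and $(\nabla h_{1j},\nabla \widetilde{h}_{0j})_{\Gamma_{j}}$ by parts. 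The $\Gamma_{s}$-pieces involving $\partial_{\nu}u_{0}$ and $\partial_{\nu}w_{0}$ pair with $\widetilde{u}_{0}-\widetilde{h}_{1j}$ and $\widetilde{w}_{1}-\widetilde{h}_{1j}$ respectively (after invoking (A.iii) so that $u_{0}=h_{1j}=w_{1}$ on $\Gamma_{j}$), and vanish provided $\widetilde{\Phi}$ satisfies the same velocity matching. The $\partial\Gamma_{j}$-pieces $\sum_{j}\langle h_{1j},\partial_{n_{j}}\widetilde{h}_{0j}\rangle_{\partial\Gamma_{j}}$ and $\sum_{j}\langle\partial_{n_{j}}h_{0j},\widetilde{h}_{1j}\rangle_{\partial\Gamma_{j}}$ cancel by the telescoping argument already used for (\ref{10.5}), one side using (A.iv) on $\Phi$ and the other using its analogue on $\widetilde{\Phi}$.

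Once the boundary dust settles, what remains is precisely
$$(u_{0},\Delta\widetilde{u}_{0})_{\Omega_{f}}-\sum_{j}\bigl[(h_{1j},\Delta\widetilde{h}_{0j})_{\Gamma_{j}}-(h_{1j},\widetilde{h}_{0j})_{\Gamma_{j}}+(\nabla h_{0j},\nabla\widetilde{h}_{1j})_{\Gamma_{j}}+(h_{0j},\widetilde{h}_{1j})_{\Gamma_{j}}\bigr]-(w_{1},\Delta\widetilde{w}_{0})_{\Omega_{s}}-(\nabla w_{0},\nabla\widetilde{w}_{1})_{\Omega_{s}},$$
which, term-by-term under the $\mathbf{H}$-inner product (\ref{Hilbert}), is exactly $(\Phi,\mathbf{A}^{\ast}\widetilde{\Phi})_{\mathbf{H}}$ for the matrix in the statement; the minus signs on the ``velocity'' rows $\widetilde{h}_{1j}$ and $\widetilde{w}_{1}$ and on the $\Delta \widetilde{w}_0$ row, together with the reversal of $\Delta-I$ to $I-\Delta$ and of the coupling signs on the thin-wave row, are accounted for by the sign changes produced by the double integration-by-parts on $\Omega_{f}$ (which is self-adjoint) versus the single integration by parts on the wave blocks (which are skew).

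The converse direction --- that any $\widetilde{\Phi}\in D(\mathbf{A}^{\ast})$ necessarily satisfies the stated boundary/compatibility conditions and has $\mathbf{A}^{\ast}\widetilde{\Phi}$ of the claimed form --- proceeds by testing with specialized $\Phi\in D(\mathbf{A})$: first localize with $\varphi\in\mathcal{D}(\Omega_{f})$, $\xi\in\mathcal{D}(\Omega_{s})$, and $\psi_{j}\in\mathcal{D}(\Gamma_{j})$ to extract the PDE-interior equations in the distributional sense, then recover the interface conditions by choosing test functions supported near $\Gamma_{s}$ and near each $\partial\Gamma_{j}\cap\partial\Gamma_{l}$, exactly as in the harvesting argument at the end of the proof of Theorem \ref{well} (equations (\ref{inter})--(\ref{last})). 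I expect the main obstacle to be exactly this bookkeeping around the edges $\partial\Gamma_{j}\cap\partial\Gamma_{l}$: one must give rigorous meaning to the $H^{-1/2}(\partial\Gamma_{j})$-duality for $\partial_{n_{j}}\widetilde{h}_{0j}$, and verify the sign-reversed edge compatibility for $\widetilde{\Phi}$ using the density of $H_{0}^{1/2+\epsilon}$ traces as was done around (\ref{g})--(\ref{last}); modulo that careful polygonal geometry, the rest is a routine if lengthy Green's-identity computation.
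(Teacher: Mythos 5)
The paper offers no proof of Proposition \ref{adj} (it is asserted as ``readily computed''), and your Green's-identity strategy is exactly the computation that justifies it; the telescoping cancellation of the $\partial\Gamma_{j}$-pairings via (A.iv) on one factor and its analogue on the other is handled correctly, as is the skew/self-adjoint dichotomy between the wave blocks and the heat block.

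There is, however, one concrete slip in your bookkeeping. When you integrate $(\Delta u_{0},\widetilde{u}_{0})_{\Omega_{f}}$ by parts twice and $(\nabla w_{1},\nabla\widetilde{w}_{0})_{\Omega_{s}}$ once, the interface terms that survive are not only the ones you list (those pairing $\partial_{\nu}u_{0}$ with $\widetilde{u}_{0}-\widetilde{h}_{1j}$ and $\partial_{\nu}w_{0}$ with $\widetilde{w}_{1}-\widetilde{h}_{1j}$, which indeed vanish under the adjoint velocity matching); one is also left with $-\langle u_{0},\partial_{\nu}\widetilde{u}_{0}\rangle_{\Gamma_{s}}$ and $-\langle w_{1},\partial_{\nu}\widetilde{w}_{0}\rangle_{\Gamma_{s}}$, which by (A.iii) equal $-\sum_{j}\langle h_{1j},\partial_{\nu}\widetilde{u}_{0}+\partial_{\nu}\widetilde{w}_{0}\rangle_{\Gamma_{j}}$. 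These are precisely the $-\frac{\partial}{\partial\nu}|_{\Gamma_{j}}$ entries in the first and seventh columns of the thin-wave rows of $\mathbf{A}^{\ast}$, i.e.\ the entire coupling structure of the adjoint. Your displayed ``remaining'' expression omits them, so as written it corresponds to an adjoint with \emph{no} flux coupling on the $\widetilde{h}_{1j}$-slots, which is not the stated operator (and would not even define a densely defined closed operator consistent with the asserted domain). The fix is mechanical --- carry those two boundary terms into the final identity --- and your verbal remark about ``reversal of the coupling signs'' shows you anticipated them, but the proof as displayed does not establish the claimed formula. Your plan for the converse inclusion $D(\mathbf{A}^{\ast})\subseteq\widetilde{D}$ by testing with localized elements of $D(\mathbf{A})$, mirroring (\ref{inter})--(\ref{last}), is the right one and is in fact the part the paper silently skips. (Separately, note the statement itself has a typo: the entry in the sixth row, seventh column should read $-\frac{\partial}{\partial\nu}|_{\Gamma_{K}}$, not $-\frac{\partial}{\partial\nu}|_{\Gamma_{1}}$.)
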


\bigskip where%
\begin{equation*}
\begin{array}{l}
D(\mathbf{A}^{\ast })=\left\{ \left[ u_{0},h_{01},h_{11},\ldots
,h_{0K},h_{1K},w_{0},w_{1}\right] \in \mathbf{H}:\right.  \\ 
\text{ \ \ (A}^{\ast }\text{.i) }u_{0}\in H^{1}(\Omega _{f})\text{, }%
h_{1j}\in H^{1}(\Gamma _{j})\text{ for }1\leq j\leq K\text{, }w_{1}\in
H^{1}(\Omega _{s})\text{;} \\ 
\text{ \ }\left. \text{(A}^{\ast }\text{.ii) (a) }\Delta u_{0}\in
L^{2}(\Omega _{f})\text{, }\Delta w_{0}\in L^{2}(\Omega _{s})\text{, (b) }%
-\Delta h_{0j}-\frac{\partial u_{0}}{\partial \nu }|_{\Gamma _{j}}-\frac{%
\partial w_{0}}{\partial \nu }|_{\Gamma _{j}}\in L^{2}(\Gamma _{j})\text{ \
for\ } 1\leq j\leq K\text{;}\right.  \\ 
\text{ \ \ \ \ \ \ (c) }\left. \dfrac{\partial h_{0j}}{\partial n_{j}}%
\right\vert _{\partial \Gamma _{j}}\in H^{-\frac{1}{2}}(\partial \Gamma _{j})%
\text{, \ for\ } 1\leq j\leq K\text{;} \\ 
\text{ \ }\left. \text{(A}^{\ast }\text{.iii) }u_{0}|_{\Gamma _{f}}=0,\ \
u_{0}|_{\Gamma _{j}}=h_{1j}=w_{1}|_{\Gamma _{j}},\ \text{for }1\leq j\leq K%
\text{;}\right.  \\ 
\text{ \ }\left. \text{(A}^{\ast }\text{.iv) For }1\leq j\leq K\text{: }%
\right.  \\ 
\text{ \ \ \ \ \ \ (a) }h_{1j}|_{\partial \Gamma _{j}\cap \partial \Gamma
_{l}}=h_{1l}|_{\partial \Gamma _{j}\cap \partial \Gamma _{l}}\text{ on \ }%
\partial \Gamma _{j}\cap \partial \Gamma _{l}\text{, for all }1\leq l\leq K%
\text{ such that }\partial \Gamma _{j}\cap \partial \Gamma _{l}\neq
\emptyset ; \\ 
\text{ \ \ \ \ \ \ \ }\left. \text{(b) }\left. \dfrac{\partial h_{0j}}{%
\partial n_{j}}\right\vert _{\partial \Gamma _{j}\cap \partial \Gamma
_{l}}=-\left. \dfrac{\partial h_{0_{l}}}{\partial n_{l}}\right\vert
_{\partial \Gamma _{j}\cap \partial \Gamma _{l}}\text{ on \ }\partial \Gamma
_{j}\cap \partial \Gamma _{l}\text{, for all }1\leq l\leq K\text{ such that }%
\partial \Gamma _{j}\cap \partial \Gamma _{l}\neq \emptyset \right\} .%
\end{array}%
\end{equation*}%
Now, we continue with analyzing the point and continuous spectra of the generator $\mathbf{A}$: 

\begin{lemma}
\label{point-cont-spec} The point $\sigma_p(\mathbf{A})$ and continuous spectra $\sigma_c(\mathbf{A})$ of $\mathbf{A}$
have empty intersection with $i%
%TCIMACRO{\U{211d} }%
%BeginExpansion
\mathbb{R}
%EndExpansion
$.
\end{lemma}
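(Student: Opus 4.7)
The plan is to handle the two spectral components separately, via dissipativity and (for the approximate part) wave multipliers.

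First, I will dispense with the point spectrum. Suppose $\mathbf{A}\Phi=i\beta\Phi$ for some $\beta\in\mathbb{R}$ and $\Phi=[u_0,h_{01},h_{11},\ldots,h_{0K},h_{1K},w_0,w_1]\in D(\mathbf{A})$. Taking the real part of the inner product $(\mathbf{A}\Phi,\Phi)_{\mathbf{H}}=i\beta\|\Phi\|_{\mathbf{H}}^2$ and comparing with the dissipativity identity \eqref{dissi} immediately forces $\|\nabla u_0\|_{\Omega_f}=0$; combined with $u_0|_{\Gamma_f}=0$ and Poincaré's inequality this yields $u_0\equiv 0$ in $\Omega_f$. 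If $\beta=0$, Proposition \ref{invert} (i.e., $0\in\rho(\mathbf{A})$) immediately gives $\Phi=0$. If $\beta\neq 0$, then the kinematic condition (A.iii) yields $h_{1j}=w_1|_{\Gamma_j}=0$, and the first component equation of the resolvent relation (i.e., $i\beta h_{0j}=h_{1j}$) forces $h_{0j}=0$ on each $\Gamma_j$. Substituting $u_0,h_{0j},h_{1j}=0$ into the $h_j$-equation also gives $\partial_\nu w_0|_{\Gamma_j}=0$. Finally, combining the two thick-wave component equations yields the Helmholtz problem $-\Delta w_0=\beta^2 w_0$ on $\Omega_s$ with zero Dirichlet data (via the compatibility $w_0|_{\Gamma_j}=h_{0j}=0$) and zero Neumann data; Holmgren's uniqueness theorem (or classical unique continuation for the Laplacian) then forces $w_0\equiv 0$, whence $w_1=i\beta w_0=0$, giving $\Phi=0$, a contradiction.

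For the continuous spectrum, I will proceed by contradiction on the approximate-point-spectrum formulation. Suppose $i\beta\in\sigma_c(\mathbf{A})$; then there is a sequence $\{\Phi_n\}\subset D(\mathbf{A})$ with $\|\Phi_n\|_{\mathbf{H}}=1$ and $\Phi_n^\ast:=(i\beta I-\mathbf{A})\Phi_n\to 0$ in $\mathbf{H}$. Reapplying the dissipativity identity to $\Phi_n$ and taking real parts yields $\|\nabla u_{0,n}\|_{\Omega_f}^2=\mathrm{Re}(\Phi_n^\ast,\Phi_n)_{\mathbf{H}}\to 0$. Hence $u_{0,n}\to 0$ in $H^1_{\Gamma_f}(\Omega_f)$; from the heat equation in the resolvent relation one also reads off $\Delta u_{0,n}\to 0$ in $L^2(\Omega_f)$, which by Green's identity yields $\partial_\nu u_{0,n}\to 0$ in $H^{-1/2}(\Gamma_s)$. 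The Sobolev trace map then implies $h_{1j,n}=u_{0,n}|_{\Gamma_j}\to 0$ in $H^{1/2}(\Gamma_j)$, and similarly $w_{1,n}|_{\Gamma_j}\to 0$; moreover, the first resolvent component for $h_{0j}$ gives $h_{0j,n}\to 0$ in $L^2(\Gamma_j)$ (assuming $\beta\neq 0$; the case $\beta=0$ is already handled by Proposition \ref{invert}).

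The main obstacle is that these convergences are not yet strong enough to contradict $\|\Phi_n\|_{\mathbf{H}}=1$, since $\|\Phi_n\|_{\mathbf{H}}^2$ contains $\|\nabla w_{0,n}\|^2_{\Omega_s}$, $\|w_{1,n}\|^2_{\Omega_s}$, and $\|\nabla h_{0j,n}\|^2_{\Gamma_j}$, none of which have been controlled yet. To handle them, eliminate $w_{1,n}$ from the two thick-wave components of the resolvent equation to see that $w_{0,n}$ satisfies a Helmholtz problem $-\Delta w_{0,n}-\beta^2 w_{0,n}=f_n$ with $f_n\to 0$ in $L^2(\Omega_s)$ and boundary data $w_{0,n}|_{\Gamma_j}=h_{0j,n}$. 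To obtain a bound on $\partial_\nu w_{0,n}|_{\Gamma_s}$ in $H^{-1/2}$-norm — which is precisely what is needed for the $h_{0j,n}$-equation — I will invoke a static Rellich-type wave multiplier identity with vector field $h(x)=x-x_0$ (in the spirit of the uniform boundary stabilization identities the authors cite), producing
\[
\|\nabla w_{0,n}\|_{\Omega_s}^2+\beta^2\|w_{0,n}\|_{\Omega_s}^2 \leq C\bigl[\|\partial_\nu w_{0,n}\|_{H^{-1/2}(\Gamma_s)}^2+\|w_{0,n}|_{\Gamma_s}\|_{H^{1/2}(\Gamma_s)}^2+\|f_n\|_{\Omega_s}^2\bigr].
\]
An analogous multiplier identity applied to each thin-wave Helmholtz equation on the polygonal face $\Gamma_j$ (with boundary terms on $\partial\Gamma_j$ controlled by the matching conditions (A.iv)) yields $\|\nabla h_{0j,n}\|^2_{\Gamma_j}+\|h_{0j,n}\|^2_{\Gamma_j}\to 0$. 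Bootstrapping between these two identities and the kinematic coupling $w_{0,n}|_{\Gamma_j}=h_{0j,n}$, and using that $\partial_\nu u_{0,n}\to 0$ in $H^{-1/2}(\Gamma_s)$ to absorb the flux term appearing in the $h_{j,n}$-equation, yields $w_{0,n}\to 0$ in $H^1(\Omega_s)$, $w_{1,n}\to 0$ in $L^2(\Omega_s)$, and each $h_{0j,n}\to 0$ in $H^1(\Gamma_j)$. This contradicts $\|\Phi_n\|_{\mathbf{H}}=1$, completing the elimination of $\sigma_c(\mathbf{A})$ from $i\mathbb{R}$. The hardest technical step is calibrating the static multiplier identity on $\Omega_s$ so that the $H^{-1/2}$-norm of $\partial_\nu w_{0,n}|_{\Gamma_s}$ (which is only available through the thin-wave equation, where it competes with $h_{1j,n}^\ast$ and the coupling terms) can be absorbed on the left-hand side.
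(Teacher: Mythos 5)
Your treatment of the point spectrum is fine (and is in fact a cleaner, more direct version of what the paper obtains as a byproduct of its approximate-spectrum argument), and the opening of your continuous-spectrum argument — dissipativity kills $u_{0,n}$ in $H^1$, hence the traces $h_{1j,n}$, $w_{1,n}|_{\Gamma_j}$ and the flux $\partial_\nu u_{0,n}$ — matches the paper's Step 1. The gap is in the middle: your displayed multiplier inequality bounds the interior energy of $w_{0,n}$ \emph{by} $\|\partial_\nu w_{0,n}\|_{H^{-1/2}(\Gamma_s)}$, but that quantity is exactly what you do not have; the only access to $\partial_\nu w_{0,n}|_{\Gamma_j}$ is through the thin-wave equation, where it is expressed via $\Delta h_{0j,n}$, which is controlled only after one knows $h_{0j,n}\to 0$ in $H^1(\Gamma_j)$ — and that, in turn, requires pairing $\partial_\nu w_{0,n}$ against $h_{0j,n}$ in the thin-wave energy identity. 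As you yourself note, the term would have to be ``absorbed,'' but no mechanism for absorption is given, so the bootstrap is circular. (Separately, the estimate as stated, with $H^{-1/2}\times H^{1/2}$ boundary norms, is not what the Rellich identity with $m(x)=x-x_0$ delivers; the standard identity produces $L^2(\Gamma_s)$ norms of the Neumann trace and tangential gradient, and the inequality in the direction you want it is an observability-type statement that is not free.)

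The missing idea — and the crux of the paper's proof — is to break the circularity \emph{before} touching the thin-wave equations: homogenize the thick-wave Dirichlet data by setting $z_n=w_{0n}+\tfrac{i}{\beta}D_s[u_n|_{\Gamma_s}+w_{0n}^\ast|_{\Gamma_s}]$ as in \eqref{71}, where the harmonic extension involves only data already known to vanish or be controlled. Then $z_n|_{\Gamma_s}=0$, convexity of $\Omega_s$ gives $z_n\in H^2(\Omega_s)$ (so $\partial_\nu z_n\in L^2(\Gamma_s)$ makes sense), and the static multiplier identity with the Grisvard field satisfying $-m\cdot\nu\ge\delta$ in \eqref{74} is used in the \emph{opposite} direction from yours: the Dirichlet boundary terms vanish, the $|\partial_\nu z_n|^2$ term acquires a sign, and the identity yields the a priori trace bound $\|\partial_\nu z_n\|_{L^2(\Gamma_s)}\le C$ of \eqref{76} from the already-bounded interior energy $\|\Phi_n\|_{\mathbf H}=1$. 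That uniform $H^{-1/2}(\Gamma_s)$ bound on $\partial_\nu w_{0n}$, paired against $h_{jn}\to 0$ strongly in $H^{1/2}(\Gamma_j)$, is what closes the plain energy identity \eqref{77} for the thin waves (no multiplier on $\Gamma_j$ is needed; the $\partial\Gamma_j$ terms cancel by (A.iv)). Finally, even with $h_{0j,n}\to 0$ in $H^1(\Gamma_j)$ in hand, multiplier estimates alone do not give $w_{0n}\to 0$ strongly in $H^1(\Omega_s)$; the paper needs the interpolation step \eqref{82}--\eqref{83} and then a compactness--uniqueness argument (weak limit solving an overdetermined interior eigenvalue problem, hence zero). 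Your ``bootstrapping'' sentence does not substitute for either of these steps.
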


\textbf{Proof. } To prove this, it will be
enough to show that $i%
%TCIMACRO{\U{211d} }%
%BeginExpansion
\mathbb{R}
%EndExpansion
\backslash \{0\}$ has empty intersection with the approximate spectrum of $\mathbf{A}$;
see e.g., Theorem 2.27, pg. 128 of \cite{F}. To this end, given $\beta \neq 0,$
suppose that $i\beta $ is in the approximate spectrum of $\mathbf{A.}$ Then
there exist sequences%
\begin{equation}
\{\Phi _{n}\}=\left\{ \left[ 
\begin{array}{c}
u_{n} \\ 
h_{1n} \\ 
\xi _{1n} \\ 
\vdots  \\ 
h_{Kn} \\ 
\xi _{Kn} \\ 
w_{0n} \\ 
w_{1n}%
\end{array}%
\right] \right\} \subseteq D(\mathbf{A});\text{ \ \ \ \ \ \ \ }\{(i\beta I-%
\mathbf{A)}\Phi _{n}\}=\left\{ \left[ 
\begin{array}{c}
u_{n}^{\ast } \\ 
\varphi _{1n}^{\ast } \\ 
\psi _{1n}^{\ast } \\ 
\vdots  \\ 
\varphi _{Kn}^{\ast } \\ 
\psi _{Kn}^{\ast } \\ 
w_{0n}^{\ast } \\ 
w_{1n}^{\ast }%
\end{array}%
\right] \right\} \subseteq \mathbf{H}\text{\ ,}  \label{60}
\end{equation}%
which satisfy for $n=1,2,...,$%
\begin{equation}
\left\Vert \Phi _{n}\right\Vert _{\mathbf{H}}=1,\text{ \ \ \ }\left\Vert
(i\beta I-\mathbf{A)}\Phi _{n}\right\Vert _{\mathbf{H}}<\frac{1}{n}.
\label{61}
\end{equation}%
As such, each $\Phi _{n}$ solves the following static system:%
\begin{equation}
\left\{ 
\begin{array}{c}
i\beta u_{n}-\Delta u_{n}=u_{n}^{\ast }\text{ \ \ in \ }\Omega _{f} \\ 
u_{n}|_{\Gamma _{f}}=0\text{ \ \ on \ }\Gamma _{f}%
\end{array}%
\right.   \label{62}
\end{equation}%
For $1\leq j\leq K,$%
\begin{equation}
\left\{ 
\begin{array}{c}
i\beta h_{jn}-\xi _{jn}=\varphi _{jn}^{\ast }\text{ \ \ in \ }\Gamma _{j} \\ 
-\beta ^{2}h_{jn}-\Delta h_{jn}+h_{jn}+\frac{\partial u_{n}}{\partial \nu }-%
\frac{\partial w_{0n}}{\partial \nu }=\psi _{jn}^{\ast }+i\beta \varphi
_{jn}^{\ast }\text{ \ \ in \ }\Gamma _{j}%
\end{array}%
\right.   \label{63}
\end{equation}%
Also%
\begin{equation}
\left\{ 
\begin{array}{c}
i\beta w_{0n}-w_{1n}=w_{0n}^{\ast }\text{ \ \ in \ \ }\Omega _{s} \\ 
-\beta ^{2}w_{0n}-\Delta w_{0n}=w_{1n}^{\ast }+i\beta w_{0n}^{\ast }\text{ \
\ in \ \ }\Omega _{s}%
\end{array}%
\right.   \label{64}
\end{equation}%
and again for $1\leq j\leq K,$%
\begin{equation}
\left\{ 
\begin{array}{c}
u_{n}|_{\Gamma _{j}}=\xi _{jn}=w_{1n}|_{\Gamma _{j}} \\ 
\left. \dfrac{\partial h_{nj}}{\partial n_{j}}\right\vert _{\partial \Gamma
_{j}\cap \partial \Gamma _{l}}=-\left. \dfrac{\partial h_{nl}}{\partial n_{l}%
}\right\vert _{\partial \Gamma _{j}\cap \partial \Gamma _{l}}\text{  for all 
}1\leq l\leq K\text{ such that }\partial \Gamma _{j}\cap \partial \Gamma
_{l}\neq \emptyset .
\end{array}%
\right.   \label{65}
\end{equation}%
Now the left part of the proof of Lemma \ref{point-cont-spec} will be given in five steps:\newline\\
\underline{\textbf{STEP 1:}}\textbf{\ (Estimating the heat component of }$%
\Phi _{n}$\textbf{) }\\ \\Proceeding as we did in establishing the dissipativity
of $\mathbf{A}:D(\mathbf{A})\subset \mathbf{H}\rightarrow \mathbf{H}$, (see
relations (\ref{ten}) and (\ref{dissi})), if we denote 
\begin{equation*}
\Phi _{n}^{\ast }=(i\beta I-\mathbf{A)}\Phi _{n}
\end{equation*}%
then from the relation%
\begin{equation*}
\left( (i\beta I-\mathbf{A)}\Phi _{n},\Phi _{n}\right) _{\mathbf{H}}=(\Phi
_{n}^{\ast },\Phi _{n})_{\mathbf{H}},
\end{equation*}%
we obtain%
\begin{equation}
\left\Vert \nabla u_{n}\right\Vert _{\Omega _{f}}^{2}=\text{Re}(\Phi
_{n}^{\ast },\Phi _{n})_{\mathbf{H}}.  \label{66}
\end{equation}
From (\ref{61}), we then have%
\begin{equation}
\underset{n\rightarrow \infty }{\lim }u_{n}=0\text{ \ \ in \ \ }H^{1}(\Omega
_{f}).  \label{67}
\end{equation}%
In turn, via the thin wave resolvent condition in (\ref{63}) and boundary
conditions in (\ref{65}), we have for $1\leq j\leq K$ 
\begin{equation*}
h_{jn}=-\frac{i}{\beta }u_{n}|_{\Gamma _{j}}-\frac{i}{\beta }\varphi
_{jn}^{\ast }\text{ \ \ in \ }\Gamma _{j}.
\end{equation*}%
From this relation, we can then invoke (\ref{67}), the Sobolev Trace Map,
and (\ref{61}), to have%
\begin{equation}
\underset{n\rightarrow \infty }{\lim }h_{jn}=0\text{ \ \ in \ \ }H^{\frac{1}{%
2}}(\Gamma _{j})  \label{68}
\end{equation}%
for $1\leq j\leq K.$ Moreover, an integration by parts, with respect to the
heat equation (\ref{62}), gives the estimate%
\begin{eqnarray*}
\left\Vert \frac{\partial u_{n}}{\partial \nu }\right\Vert _{H^{-\frac{1}{2}%
}(\partial \Omega _{f})} &\leq &C\left[ \left\Vert \nabla u_{n}\right\Vert
_{\Omega _{f}}+\left\Vert \Delta u_{n}\right\Vert _{\Omega _{f}}\right]  \\
&\leq &C\left[ \left\Vert \nabla u_{n}\right\Vert _{\Omega _{f}}+\left\Vert
i\beta u_{n}-u_{n}^{\ast }\right\Vert _{\Omega _{f}}\right] .
\end{eqnarray*}%
Now, invoking (\ref{66}) and (\ref{61}) gives%
\begin{equation}
\underset{n\rightarrow \infty }{\lim }\frac{\partial u_{n}}{\partial \nu }=0%
\text{ \ \ in \ \ }H^{-\frac{1}{2}}(\Gamma _{j}) . \label{69}
\end{equation}%
\newline
\underline{\textbf{STEP 2:}} We start here by defining the "Dirichlet" map $%
D_{s}:L^{2}(\Gamma _{s})\rightarrow L^{2}(\Omega _{s})$\ via%
\begin{equation*}
D_{s}g=f\Longleftrightarrow \left\{ 
\begin{array}{c}
\Delta f=0\text{ \ in \ }\Omega _{s} \\ 
f|_{\Gamma _{s}}=g\text{ \ on \ }\Gamma _{s}.%
\end{array}%
\right. 
\end{equation*}%
We know by the Lax-Milgram Theorem%
\begin{equation}
D_{s}\in \mathcal{L}(H^{\frac{1}{2}}(\Gamma _{s}),H^{1}(\Omega _{s})).
\label{70}
\end{equation}%
Therewith, considering the resolvent relations in (\ref{64}), we set
\begin{equation}
z_{n}\equiv w_{0n}+\frac{i}{\beta}D_{s}[u_{n}|_{\Gamma _{s}}+w_{0n}^{\ast }|_{\Gamma _{s}}],  \label{71}
\end{equation}%
and so from (\ref{64}) $z_{n}$ satisfies the following BVP:%
\begin{equation}
\left\{ 
\begin{array}{c}
-\beta ^{2}z_{n}-\Delta z_{n}=w_{1n}^{\ast }+i\beta w_{0n}^{\ast }-i\beta D_{s}[u_{n}|_{\Gamma _{s}}+w_{0n}^{\ast }|_{\Gamma _{s}}]
\text{ \ \ in \ \ }\Omega _{s} \\ 
z_{n}|_{\Gamma _{s}}=0\text{ \ \ on \ \ }\Gamma _{s}.%
\end{array}%
\right. \label{bvp}
\end{equation}%
Since $\Omega _{s}$ is convex, then $z_{n}\in H^{2}(\Omega _{s}).$ See e.g., Theorem 3.2.1.2, pg. 147 of \cite{GV}. In
consequence, we can apply the static version of the well-known wave identity
which is often used in PDE control theory-- [see (Proposition 7 (ii) of \cite{AG1}), 
\cite{chen}, \cite{trigg}. To wit, let $m(x)$
be any $[C^{2}(\overline{\Omega _{s}})]^{3}$- vector field with associated
Jacobian matrix%
\begin{equation*}
\left[ M(x)\right] _{ij}=\frac{\partial m_{i}(x)}{\partial x_{j}},\text{ \ \ 
}1\leq i,j\leq 3
\end{equation*}%
Therewith, we have%
\begin{eqnarray}
&&\int\limits_{\Omega _{s}}M\nabla z_{n}\cdot \nabla z_{n}d\Omega _{s} 
\notag \\
&=&-\text{Re}\int\limits_{\Gamma _{s}}\frac{\partial z_{n}}{\partial \nu }%
m\cdot \nabla \overline{z_{n}}d\Gamma _{s}  \notag \\
&&-\frac{\beta ^{2}}{2}\int\limits_{\Gamma _{s}}\left\vert z_{n}\right\vert
^{2}m\cdot \nu d\Gamma _{s}+\frac{1}{2}\int\limits_{\Gamma _{s}}\left\vert
\nabla z_{n}\right\vert ^{2}m\cdot \nu d\Gamma _{s}  \notag \\
&&+\frac{1}{2}\int\limits_{\Omega _{s}}\{\left\vert \nabla z_{n}\right\vert
^{2}-\beta ^{2}\left\vert z_{n}\right\vert ^{2}\}\text{div}(m)d\Omega _{s} 
\notag \\
&&+\text{Re}\int\limits_{\Omega _{s}}\left[ F_{\beta }^{\ast }-i\beta
D_{s}[u_{n}|_{\Gamma _{s}}+w_{0n}^{\ast }|_{\Gamma _{s}}]\right] m\cdot \nabla \overline{z_{n}}d\Omega
_{s},  \label{72}
\end{eqnarray}%
where 
\begin{equation}
F_{\beta }^{\ast }=(\text{Re}w_{1n}^{\ast }-\beta I_{m}w_{0n}^{\ast
})+i(I_{m}w_{1n}^{\ast }+\beta \text{Re}w_{0n}^{\ast }).  \label{73}
\end{equation}%
Again, relation (\ref{72}) holds for any $C^{2}-$vector field $m(x).$ We now
specify it to be the smooth vector field of Lemma 1.5.1.9, pg. 40 of
\cite{GV}. Namely, for some $\delta >0,$ the $C^{\infty }$ vector
field $m(x)$ satisfies%
\begin{equation}
-m(x)\cdot \nu \geq \delta \text{ \ \ a.e. \ on  }\Gamma _{s}  \label{74}
\end{equation}%
Specifying this vector field in (\ref{72}), and considering that $%
z_{n}|_{\Gamma _{s}}=0,$ we have then%
\begin{eqnarray}
&&-\frac{1}{2}\int\limits_{\Gamma _{s}}\left\vert \frac{\partial z_{n}}{%
\partial \nu }\right\vert ^{2}m\cdot \nu d\Gamma _{s}  \notag \\
&=&\int\limits_{\Omega _{s}}M\nabla z_{n}\cdot \nabla z_{n}d\Omega _{s} 
\notag \\
&&+\frac{1}{2}\int\limits_{\Omega _{s}}\{\beta ^{2}\left\vert
z_{n}\right\vert ^{2}-\left\vert \nabla z_{n}\right\vert ^{2}\}d\Omega _{s} 
\notag \\
&&-\text{Re}\int\limits_{\Omega _{s}}\left[ F_{\beta }^{\ast }-i\beta
D_{s}[u_{n}|_{\Gamma _{s}}+w_{0n}^{\ast }|_{\Gamma _{s}}]\right] m\cdot \nabla \overline{z_{n}}d\Omega
_{s}.  \label{75}
\end{eqnarray}%
Estimating this relation via (\ref{61}), ((\ref{67}), \ref{71}), (\ref{70}) and the
Sobolev Trace map, we then have%
\begin{equation}
\int\limits_{\Gamma _{s}}\left\vert \frac{\partial z_{n}}{\partial \nu }%
\right\vert ^{2}d\Gamma _{s}\leq C_{\delta ,\beta ,m},  \label{76}
\end{equation}%
where positive constant $C_{\delta ,\beta ,m}$ is independent of $n=1,2,...$%
\newline\\
\underline{\textbf{STEP 3:}} \textbf{( An energy estimate for $h_{jn}$ )} \\ \\We multiply
both sides of the thin wave $h_{jn}-$ equation (\ref{63}) by $h_{jn},$
integrate and subsequently integrate by parts to have for $1\leq j\leq K,$%
\begin{eqnarray}
\int\limits_{\Gamma _{j}}\left\vert \nabla h_{jn}\right\vert ^{2}d\Gamma
_{j} &=&\int\limits_{\Gamma _{j}}\frac{\partial w_{0n}}{\partial \nu }%
h_{jn}d\Gamma _{j}  \notag \\
&&+(\beta ^{2}-1)\int\limits_{\Gamma _{j}}\left\vert h_{jn}\right\vert
^{2}d\Gamma _{j}-\int\limits_{\Gamma _{j}}\frac{\partial u_{n}}{\partial
\nu }h_{jn}d\Gamma _{j}  \notag \\
&&+\int\limits_{\Gamma _{j}}(\psi _{jn}^{\ast }+i\beta \varphi _{jn}^{\ast
})h_{jn}d\Gamma _{j}  \label{77}
\end{eqnarray}%
Here, we are also implicitly using $D(\mathbf{A})$-criterion (A.iv). For the first term on RHS: we note that upon combining the regularity for $D_s$ in (\ref{70}) with an integration by parts, we have that  
\begin{equation}
\frac{\partial }{\partial
\nu }D_s \in \mathcal{L}(H^{\frac{1}{2}}(\Gamma _{s}),H^{-\frac{1}{2}}(\Omega _{s})) \label{add1}
\end{equation}
This gives the estimate, via the decomposition (\ref{71}),
\begin{equation}
\left\Vert \frac{\partial w_{0n}}{\partial \nu }\right\Vert _{H^{-\frac{1}{2}%
}(\Gamma _{s})}\leq C\left[ \left\Vert \frac{\partial z_{n}}{\partial \nu }%
\right\Vert _{H^{-\frac{1}{2}}(\Gamma _{s})}+\left\Vert i\beta\frac{\partial
}{\partial \nu }D_{s}[u_{n}|_{\Gamma _{s}}+w_{0n}^{\ast }|_{\Gamma _{s}}]\right\Vert _{H^{-\frac{1}{2}%
}(\Gamma _{s})}\right] \leq C_{\beta },  \label{78}
\end{equation}%
after also using (\ref{61}), (\ref{67}), The Sobolev Trace Map, and (\ref{76}).
Applying this estimate to RHS of (\ref{77}), along with (\ref{68}), (\ref{69}%
), and (\ref{61}) we have%
\begin{equation}
\underset{n\rightarrow \infty }{\lim }h_{jn}=0\text{ \ \ in \ \ }%
H^{1}(\Gamma _{j}),\text{ \ \ }1\leq j\leq K.  \label{79}
\end{equation}%
\newline
\underline{\textbf{STEP 4:}} \\ \\ We note from the previous step that the limit
in (\ref{79}) when applied to the equation%
\begin{equation*}
\frac{\partial w_{0n}}{\partial \nu }|_{\Gamma _{j}}=-\Delta h_{jn}+(1-\beta
^{2})h_{jn}+\frac{\partial u_{n}}{\partial \nu }-(\psi _{jn}^{\ast }+i\beta
\varphi _{jn}^{\ast })\text{ \ \ in \ \ }\Gamma _{j},\text{ \ \ }1\leq j\leq
K,
\end{equation*}%
gives
\begin{equation}
\underset{n\rightarrow \infty }{\lim }\frac{\partial w_{0n}}{\partial \nu }%
|_{\Gamma _{j}}=0\text{ \ \ in \ \ }H^{-1}(\Gamma _{j}).  \label{80}
\end{equation}%
In obtaining this limit, along with (\ref{79}), we are also using (\ref{69})
and (\ref{61}). In turn, via an interpolation we have for $1\leq j\leq K,$%
\begin{eqnarray}
\left\Vert \frac{\partial z_{n}}{\partial \nu }\right\Vert _{H^{-\frac{1}{2}%
}(\Gamma _{j})} &\leq &C\left\Vert \frac{\partial z_{n}}{\partial \nu }%
\right\Vert _{H^{-1}(\Gamma _{j})}^{\frac{1}{2}}\left\Vert \frac{\partial
z_{n}}{\partial \nu }\right\Vert _{L^{2}(\Gamma _{j})}^{\frac{1}{2}}  \notag
\\
&=&C\left\Vert \frac{\partial w_{0n}}{\partial
\nu }+i\beta\frac{\partial
}{\partial \nu }D_{s}[u_{n}|_{\Gamma _{s}}+w_{0n}^{\ast }|_{\Gamma _{s}}]\right\Vert _{H^{-1}(\Gamma _{s})}^{\frac{1}{2}}\left\Vert 
\frac{\partial z_{n}}{\partial \nu }\right\Vert _{L^{2}(\Gamma _{j})}^{\frac{%
1}{2}}  \label{82}
\end{eqnarray}%
Applying (\ref{add1}), (\ref{61}), (\ref{80}) and (\ref{76}) to RHS of (%
\ref{82}), we have now (upon summing up over $j$),%
\begin{equation}
\underset{n\rightarrow \infty }{\lim }\frac{\partial z_{n}}{\partial \nu }=0%
\text{ \ \ in \ \ }H^{-\frac{1}{2}}(\Gamma _{s}) . \label{83}
\end{equation}%
\newline
\underline{\textbf{STEP 5}}: \ By (\ref{61}) we have that $\{z_{n}\}$ of (%
\ref{71}) converges weakly to, say, $z$ in $H_{0}^{1}(\Omega _{s}).$ With this
limit in mind, we multiply both sides of the wave equation in (\ref{bvp}) by
given $\eta \in H^{1}(\Omega _{s}).$ Integrating by parts we then have%
\begin{eqnarray*}
&&-\beta ^{2}(z_{n},\eta )_{\Omega _{s}}+(\nabla z_{n},\nabla \eta )_{\Omega
_{s}}+\left\langle \frac{\partial z_{n}}{\partial \nu },\eta \right\rangle
_{\Gamma _{s}} \\
&=&(w_{1n}^{\ast }+i\beta w_{0n}^{\ast }-i\beta D_{s}[u_{n}|_{\Gamma _{s}}+w_{0n}^{\ast }|_{\Gamma _{s}}],\eta )_{\Omega _{s}},\text{ \ \ \ }\forall \text{ }\eta \in
H^{1}(\Omega _{s}).
\end{eqnarray*}%
Taking the limit of both sides of this equation, while taking into account (%
\ref{61}), (\ref{67}), (\ref{70}), The Sobolev Trace Map, and (\ref{83}), we
obtain that $z\in $ $H_{0}^{1}(\Omega _{s})$ satisfies the variational
problem%
\begin{equation*}
-\beta ^{2}(z,\eta )_{\Omega _{s}}+(\nabla z,\nabla \eta )_{\Omega _{s}}=0,%
\text{ \ \ }\forall \text{ }\eta \in H^{1}(\Omega _{s})
\end{equation*}%
That is, $z$ satisfies the overdetermined eigenvalue problem%
\begin{equation*}
\left\{ 
\begin{array}{c}
-\Delta z=\beta ^{2}z\text{ \ \ in \ \ }\Omega _{s} \\ 
z|_{\Gamma _{s}}=\frac{\partial z}{\partial \nu }|_{\Gamma _{s}}=0%
\end{array}%
\right. 
\end{equation*}%
which gives that%
\begin{equation*}
z=0\text{ \ \ in \ \ }\Omega _{s}
\end{equation*}%
Combining this convergence with (\ref{71}), (\ref{67}), (\ref{61}) and (\ref{70}), we get%
\begin{equation}
\underset{n\rightarrow \infty }{\lim }w_{0n}=0\text{ \ \ in \ \ }%
H^{1}(\Omega _{s}).  \label{84}
\end{equation}%
\\
\textbf{Completion of the Proof of Lemma \ref{point-cont-spec}} \\

\noindent The resolvent relations in (\ref{63}), (\ref{64}) and the convergences (\ref{68}), (\ref{84})
give also%
\begin{equation}
\left\{ 
\begin{array}{c}
\underset{n\rightarrow \infty }{\lim }\xi _{jn}=0\text{ \ \ in \ \ }%
L^{2}(\Gamma _{j}),\text{ \ \ }1\leq j\leq K \\ 
\underset{n\rightarrow \infty }{\lim }w_{1n}=0\text{ \ \ in \ \ }%
H^{1}(\Omega _{s})%
\end{array}%
\right.   \label{85}
\end{equation}%
Collecting now, (\ref{67}), (\ref{79}), (\ref{84}) and (\ref{85}) we have%
\begin{equation*}
\underset{n\rightarrow \infty }{\lim }\Phi _{n}=0\text{ \ \ in \ \ }\mathbf{%
H,}
\end{equation*}%
which contradicts (\ref{61}) and finishes the proof of Lemma \ref{point-cont-spec}.\\

Lastly, we give the following Corollary regarding the residual spectrum $\sigma_r(\mathbf{A})$:

\begin{corollary}
\label{RS} The residual spectrum $\sigma_r(\mathbf{A})$ of $\mathbf{A}$ does not intersect the
imaginary axis.
\end{corollary}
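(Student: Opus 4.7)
The plan is to use the standard duality $\sigma_r(\mathbf{A}) \subseteq \overline{\sigma_p(\mathbf{A}^*)}$, which reduces the claim to showing $\sigma_p(\mathbf{A}^*) \cap i\mathbb{R} = \emptyset$. Indeed, $\lambda \in \sigma_r(\mathbf{A})$ forces the range of $\lambda I - \mathbf{A}$ to be non-dense, so a Hahn--Banach argument supplies a nonzero $\Phi \in D(\mathbf{A}^*)$ with $\mathbf{A}^*\Phi = \bar\lambda \Phi$. The value $\lambda = 0$ is dispatched immediately by Proposition \ref{invert}, since $0 \in \rho(\mathbf{A})$ entails $0 \in \rho(\mathbf{A}^*)$.

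For $\beta \neq 0$, suppose $\Phi = [u_0, h_{01}, h_{11}, \ldots, h_{0K}, h_{1K}, w_0, w_1] \in D(\mathbf{A}^*)$ satisfies $\mathbf{A}^* \Phi = i\beta \Phi$. First I would rerun the integration-by-parts calculation (\ref{ten})--(\ref{dissi}) with $\mathbf{A}^*$ (Proposition \ref{adj}) and the adjoint domain criteria (A$^*$.i)--(A$^*$.iv) in place. Despite the sign change in the adjoint coupling ($-\partial_\nu u_0 - \partial_\nu w_0$ in (A$^*$.ii)(b) in place of $-\partial_\nu u_0 + \partial_\nu w_0$), the boundary terms in $\partial_\nu u_0$ and $\partial_\nu w_0$ still cancel against those produced by integration by parts in $\Omega_f$ and $\Omega_s$, and the edge pairings on $\partial\Gamma_j$ vanish upon summation by (A$^*$.iv)(b) exactly as in (\ref{10.5}). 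This yields the adjoint dissipation identity $\text{Re}(\mathbf{A}^* \Phi, \Phi)_{\mathbf{H}} = -\|\nabla u_0\|^2_{\Omega_f}$, and taking real parts of $(\mathbf{A}^* \Phi, \Phi)_{\mathbf{H}} = i\beta \|\Phi\|^2_{\mathbf{H}}$ forces $u_0 \equiv 0$ in $\Omega_f$.

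The remainder of the argument is purely algebraic. The trace matching in (A$^*$.iii) gives $h_{1j} = u_0|_{\Gamma_j} = 0$ and $w_1|_{\Gamma_s} = 0$. The thin-wave displacement rows of $\mathbf{A}^* \Phi = i\beta \Phi$ read $-h_{1j} = i\beta h_{0j}$, whence $h_{0j} \equiv 0$ (since $\beta \neq 0$). The thick-wave rows give $w_1 = -i\beta w_0$ and $-\Delta w_0 = \beta^2 w_0$ in $\Omega_s$, with $w_0|_{\Gamma_s} = 0$. The thin-wave force rows, with $u_0$, $h_{0j}$, $h_{1j}$ all zero, collapse to $\partial_\nu w_0|_{\Gamma_j} = 0$ on every $\Gamma_j$. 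We therefore arrive at the overdetermined Helmholtz problem
$$-\Delta w_0 = \beta^2 w_0 \text{ in } \Omega_s, \qquad w_0|_{\Gamma_s} = 0, \qquad \partial_\nu w_0|_{\Gamma_s} = 0,$$
which is precisely the one that closes Step 5 of the proof of Lemma \ref{point-cont-spec}. Convexity of $\Omega_s$ places $w_0 \in H^2(\Omega_s)$, and the static multiplier identity used there (or classical Holmgren unique continuation) forces $w_0 \equiv 0$, hence $w_1 \equiv 0$. This contradicts $\Phi \neq 0$ and completes the reduction.

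The main obstacle I anticipate is the bookkeeping of signs in the adjoint dissipation identity: because of the flipped sign of $\partial_\nu w_0$ in (A$^*$.ii)(b) compared with (A.ii)(b), one must verify explicitly that the $\partial_\nu u_0$- and $\partial_\nu w_0$-boundary terms still cancel as in (\ref{dissi}). Once this cancellation is in hand, the remainder of the proof is considerably lighter than Lemma \ref{point-cont-spec}, since we are working with a genuine eigenvector rather than an approximating sequence, and can bypass the Dirichlet-lift and multiplier machinery of Steps 2--4 entirely.
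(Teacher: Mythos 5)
Your proposal is correct, and it rests on the same duality reduction the paper uses --- $\sigma_r(\mathbf{A})\subseteq\overline{\sigma_p(\mathbf{A}^*)}$, together with Proposition \ref{invert} for $\lambda=0$ and the explicit form of $\mathbf{A}^*$ from Proposition \ref{adj} --- but your execution is genuinely leaner than the paper's. The paper disposes of the corollary in one line by ``proceeding identically as in the proof of Lemma \ref{point-cont-spec},'' i.e.\ by re-running the entire approximate-spectrum machinery (Dirichlet lift, convexity-based $H^2$ regularity, the static wave multiplier identity, the interpolation step) for $\mathbf{A}^*$; this yields both $\sigma_p(\mathbf{A}^*)\cap i\mathbb{R}=\emptyset$ and $\sigma_c(\mathbf{A}^*)\cap i\mathbb{R}=\emptyset$, which is more than the corollary needs. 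You correctly observe that only the point spectrum of $\mathbf{A}^*$ is relevant, and that a genuine eigenvector collapses Steps 1--4: the adjoint dissipation identity (which does check out --- the flipped signs in (A$^*$.ii)(b) are exactly compensated by the sign flips in the $-I$ and $-\Delta$ entries of $\mathbf{A}^*$, so the $\partial_\nu u_0$ and $\partial_\nu w_0$ boundary terms cancel and the edge terms vanish by (A$^*$.iv)(b) as in (\ref{10.5})) kills $u_0$ outright, the trace and resolvent relations then kill $h_{1j}$, $h_{0j}$, and $w_1|_{\Gamma_s}$ algebraically, and the thin-wave force rows hand you $\partial_\nu w_0|_{\Gamma_s}=0$ for free, so the overdetermined Helmholtz problem of Step 5 finishes via unique continuation with no need for the uniform bound (\ref{76}). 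The only trade-off is that your argument, taken alone, does not reprove $\sigma_c(\mathbf{A}^*)\cap i\mathbb{R}=\emptyset$; but that is not required for the residual spectrum claim, so nothing is lost for the purposes of Theorem \ref{SS}.
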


\begin{proof}
Given the form of the adjoint operator $\mathbf{A}^{\ast }:\mathbf{%
H\rightarrow H}$ in Proposition \ref{adj}, then proceeding identically as in
the proof of Lemma \textbf{\ref{point-cont-spec}} we obtain 
\begin{equation*}
\sigma _{p}(\mathbf{A}^{\ast })\cap i%
%TCIMACRO{\U{211d} }%
%BeginExpansion
\mathbb{R}
%EndExpansion
=\sigma _{c}(\mathbf{A}^{\ast })\cap i%
%TCIMACRO{\U{211d} }%
%BeginExpansion
\mathbb{R}
%EndExpansion
=\emptyset 
\end{equation*}
which finishes the proof of Corollary \ref{RS}.
\end{proof}
\\

Now, having established the above results for the spectrum of $\mathbf{A}$, we are in a position to give the proof of Theorem \ref{SS}:\\

\textbf{Proof of Theorem \ref{SS}} \\

If we combine the above results Proposition \ref{invert}, Lemma \ref{point-cont-spec} and Corollary \ref{RS} and remember that $\left\{ e^{At}\right\} _{t\geq 0}$ is a contraction semigroup, the strong stability result follows immediately from the application of Theorem \ref{AB}.

\section{Acknowledgement}

The authors G. Avalos and Pelin G. Geredeli would like to thank the National Science Foundation, and acknowledge their partial funding from NSF Grant DMS-1616425 and NSF Grant DMS-1907823.

\noindent The author Boris Muha would like to thank the Croatian Science Foundation (Hrvatska zaklada za znanost), and acknowledge his partial funding from CSF Grant IP-2018-01-3706.

\end{document}